\newtheorem{theorem}{Theorem}
\newtheorem{remark}[theorem]{Remark}
\newtheorem{lemma}[theorem]{Lemma}
\newtheorem{corollary}[theorem]{Corollary}
\newtheorem{example}[theorem]{Example}
\def\setR{{\mathbb{R}}}
\def\setN{{\mathbb{N}}}
\newcommand{\sth}{\!:\,}
\newcommand{\nl}{\nonumber\\ }
\newcommand{\calI}{{\mathcal{I}}}
\newcommand{\calT}{{\mathcal{T}}}
\newcommand{\calU}{{\mathcal{U}}}
\newcommand{\calM}{{\mathcal{M}}}
\renewcommand{\phi}{\varphi}
\renewcommand{\epsilon}{\varepsilon}
\def\calB{{\cal B}}
\newcommand{\eref}[1]{{\rm(\mbox{\ref{#1}})}}
\newcommand{\IVP}[2]{IVP \eref{#1}--\eref{#2}}
\newcommand{\IIVP}[2]{IIVP \eref{#1}--\eref{#2}}
\newcommand{\cball}[2]{{\overline\calB_{#1}\!\left({#2}\right)}}
\newcommand{\PC}{{PC([-M_1,\alpha],\setR^n)}}
\newcommand{\PCr}{{PC([-r,\alpha],\setR^n)}}
\newcommand{\PCb}{{PC([0,\alpha],\setR^n)}}
\newcommand{\essinf}{\mathop{\rm ess\,inf}}
\newcommand{\esssup}{\mathop{\rm ess\,sup}}
\begin{document}

\title{On existence, uniqueness and numerical approximation of impulsive differential equations with
adaptive state-dependent delays using equations with piecewise-constant arguments}
\author{Ferenc Hartung\\
Department of Mathematics\\
University of Pannonia\\
8201 Veszpr\'{e}m, P.O. Box 158, Hungary}

\maketitle

\begin{abstract}
In this paper we consider a  class of impulsive nonlinear differential equations
with adaptive state-dependent delays. We discuss the existence and uniqueness of solutions  of the initial value
problem using a Picard--Lindel\"of type argument where we define approximate solutions with the
help of equations with piecewise-constant arguments (EPCAs).
Moreover, we show that the solutions of the associated EPCAs approximate the solutions of the original
impulsive DDE with adaptive state-dependent delay uniformly on compact time intervals.
The key assumption underlying both results is that the delayed time function is monotone, or  piecewise strictly monotone.
\end{abstract}

\textbf{AMS(MOS) subject classification:} 34K43, 34K45, 65L03

\textbf{keywords:} Delay differential equation,
state-dependent delay,
adaptive delay,
equations with piecewise-constant arguments,
numerical approximation.
\bigskip

\section{Introduction}

State-dependent delay equations (SD-DDEs) are frequently  used in  mathematical models (see \cite{HKWW2006} for a survey of applications and basic theory, and, e.g., \cite{GHMWW2025, MSBK2025, SWLLL2025} for some recent papers). Therefore the qualitative properties of SD-DDEs is still  an actively investigated research    \cite{BK2020, BGK2025, FW2024, HW2023, LG2023, LP2022, ML2024, W2023}.
In many applications the dependence of the delay on the state of the system is defined by an algebraic or an integral equation (see \cite{HKWW2006} for some applications). Another class of SD-DDEs is when the delay function is defined by an associated differential equation
which depends on the state of the model. Such definition of a delay function is called adaptive delay. See
\cite{AHH1998, CHW2010,  Ha2024, HKWW2006, MA2000,  PL2020, W2006, WZHM2011} for some applications and
qualitative investigations of SD-DDEs with adaptive delay.
In some models due to the sudden change in the  environment such as
weather, radiation, drug administration, harvesting, etc., impulsive delay differential equations are used (see, e.g., \cite{FO2020, LS2022, SA2009}). There are only a few papers which study SD-DDEs with impulses (see \cite{A2007, C2022, FHI2019, GTM2023, Ha2024, ML2024}).

In this paper we consider a class of SD-DDEs of the form
\begin{equation}\label{e401}
 \dot x(t) = f(t,x(t),x(t-\tau(t)),\theta),\qquad \mbox{a.e.}\ t\in [0,T],
\end{equation}
where the delay function is defined by the adaptive condition
\begin{equation}\label{e401b}
 \dot \tau(t) = g(t,x(t),\tau(t)),\qquad \mbox{a.e.}\ t\in [0,T],
\end{equation}
the impulsive conditions are
\begin{equation}\label{e401c}
 \Delta x(t_k) = I_k(x(t_k-)),\qquad k=1,2,\ldots,K,
\end{equation}
where the associated  initial condition to \eref{e401} is
\begin{equation}\label{e402a}
x(t) = \phi(t),\qquad t\in(-\infty,0],
\end{equation}
and the initial condition associated to \eref{e401b} is
\begin{equation}\label{e402}
  \tau(0)=\lambda.
\end{equation}
Here  the initial delay value $\lambda$ is a positive constant,
$0<t_1<t_2<\cdots<t_K<T$ are the impulsive moments, and $\Delta x(t):=x(t)-x(t-)$.
We also assume that the initial function is piecewise continuous: on any finite interval the function $\phi$ can have only finitely many jump discontinuities, and at all such points it is right-continuous  (see assumption (A4) below for details).

By a solution of the impulsive initial value problem (IIVP) \eref{e401}--\eref{e402} on the interval $(-\infty,\alpha]$ we mean a
function $x\sth(-\infty,\alpha]\to\setR^n$ and $\tau\sth[0,\alpha]\to\setR$, where $x$ is absolutely continuous on the intervals $[t_k,t_{k+1})\subset[0,\alpha]$ for all $k$,  $x$ is right-continuous at the impulsive moments $t_k$, and
$\tau$ is continuous on $[0,\alpha]$, continuously differentiable on $[0,\alpha]$ except at the points
$\{t_1,\ldots,t_K\}\cap[0,\alpha]$, $x$ is right-continuous at the impulsive moments $t_k$,
 and  $x$ and $\tau$ satisfy \eref{e401}--\eref{e402}.

The main goal of this manuscript is to investigate local existence and uniqueness of the solutions of the \IIVP{e401}{e402},
and also the numerical approximation of the solutions.
The recent paper \cite{C2022} examines, among others, the existence and uniqueness of the solutions of impulsive SD-DDEs with explicitly defined delay function. The proof of the existence uses the Schauder Fixed Point Theorem and the assumption that the delayed time function is monotone increasing.
We note that in \cite{Ha2024} a variant of  the \IIVP{e401}{e402} was studied where the formula of $f$ and $g$ depend on
additional parameters, possibly on infinite dimensional parameters. The main goal of that paper was to study differentiability
of the solution with respect to (wrt) those parameters. Therefore it was essential in \cite{Ha2024} to assume continuous differentiability of the functions $f$ and $g$.
Moreover, the boundedness of the partial derivatives of $f$ and $g$ wrt
their variables (except the time variable) was assumed, which clearly imply global Lipschitz continuity of $f$ and $g$.
This class of conditions was practical in the presence of the infinite dimensional parameters. But if we check the proof
of the well-posedness result, Theorem 3.3 in \cite{Ha2024}, in fact, the continuous differentiability of $f$ and $g$ was not used,
only Lipschitz continuity of $f$ and $g$ was needed for the proof of well-posedness. In this paper, for the class of
the \IIVP{e401}{e402}, we need only local Lipschitz continuity of the respective functions,
which clearly allows larger classes of examples for the applications.
Note that the assumption that the initial function $\phi$
is Lipschitz continuous is also  standard  for the uniqueness of the solutions for  classes of SD-DDEs without impulses, see e.g.  \cite{Dr1961, Ha1997}.
We assume piecewise Lipschitz continuity of the initial function
 for the well-posedness and also for the convergence of the numerical method
(see \cite{GyHT1995, Ha1997, HHT1997} for similar results for other classes of SD-DDEs without impulses).

In \cite{Ha2024} it was assumed that $g(t,u,w)\leq q<1$ holds for all
$t\in[0,T]$, $u\in\setR^n$ and $w\in\setR$, which guarantees that the delayed time function $t-\tau(t)$ is strictly monotone
increasing. This property was essential in \cite{Ha2024} in the proofs of differentiability wrt parameters. See also \cite{AHH1998, MA2000} where similar condition was assumed.  In Theorem~\ref{t1} below, we assume this condition and establish existence and uniqueness when the delayed time function is monotone increasing. For convergence of our numerical approximation scheme (see Theorem~\ref{t2} below), we require only the weaker hypothesis that the delayed time function is piecewise strictly monotone (see Section~2 for the definition). We also present an existence and uniqueness result under this piecewise strict monotonicity assumption (see Theorem~\ref{t3}). But then we do not have any a priori lower estimate of the delayed time function.
Therefore, we cannot assume that the initial interval has a predefined finite length. So we associate initial condition \eref{e402a} to \eref{e401} on
an infinite interval $(-\infty,0]$. Of course, if a solution is defined on a finite interval, then the delayed time function
is bounded below by a negative finite constant along the solution.

In \cite{Ha2024} the Schauder Fixed Point Theorem combined with the method of steps
was used to prove existence of the solution. In this manuscript we use the classical Picard--Lindel\"of type method
to show the existence of the IIVP.
We define a numerical approximation scheme with the help of equations with piecewise-constant arguments (EPCAs).
We take a sequence of approximate solutions, and show that a subsequence converges to a solution of our IIVP.
A similar argument was used in \cite{HHT1997} for a class of neutral SD-DDEs.
EPCAs were first used in \cite{Gy1991} to define a numerical approximation scheme
and to prove its convergence for a
class of linear delay and neutral equations with constant delays. Later, similar schemes were defined and studied
in different classes of SD-DDEs \cite{GyH2008,GyHT1995,Ha2022,HHT1997}.
The investigation of EPCAs goes back to the works of Cooke and Wiener \cite{CB1982,Wi1983}, but it is still an
active research area, see, e.g., \cite{Ak2011, CPT2019, LOX2025}.

This paper is organized as follows. Section~\ref{s2} introduces some notations and
preliminary results, Section~\ref{s3} defines our numerical scheme and discusses the local existence and uniqueness results related to  the \IIVP{e401}{e402}.
Section~\ref{s4} contains the proof for the convergence of the numerical scheme, and Section~\ref{s5} presents two
numerical examples to illustrate the results of this manuscript.

\bigskip


\section{Notations and preliminaries}
\label{s2}

We use $\setN_0$ and $\setR$ to denote the sets of nonnegative integers and reals, respectively.
For $h>0$ we use the notation $\setN_0h=\{jh\sth j\in\setN_0\}$.
A fixed norm on $\setR^n$ is denoted by $|\cdot|$. We will use the notation
$\cball{\setR^n}{M}=\{u\in\setR^n\sth |u|\leq M\}$ for the closed ball in $\setR^n$ with
radius $M$ centered at the origin.

\bigskip

Let $r>0$ be fixed, and
consider a fixed finite sequence $-r<t_{-\ell_0}<t_{-\ell_0+1}<\cdots<t_0=0<t_1<\cdots< t_{k_0}<\alpha$. $\PCr$ denotes the space of piecewise continuous functions
$x\sth [-r,\alpha]\to\setR^n$ with discontinuity points at $\{t_{-\ell_0},\ldots,t_{k_0}\}$, where the left-hand limits $x(t_k-)$ exist, and
it is right-continuous, i.e., $x(t_k)=x(t_k+)$ for $k=1,\ldots,k_0$.
Note that in the notation $\PCr$ the dependence on the set $\{t_{-\ell_0},\ldots,t_{k_0}\}$ of the fixed
time discontinuity points  is omitted for simplicity, but it always should be kept in mind.
We have that $\PCr$ is a Banach space with the norm
$|x|_{\PCr}=\sup\{ |x(t)|\sth t\in[-r,\alpha]\}$.

Associated to the discontinuity points  we use the notations
$$
t_{-\ell_0-1}=-r,\quad t_{{k_0}+1}=\alpha,\quad \calI_k'=[t_k,t_{k+1})\ \mbox{for}\ k={-\ell_0-1},\ldots,{k_0}-1,
\quad \mbox{and}\quad \calI_{k_0}'=[t_{k_0},\alpha].
$$

It is easy to check the following generalization of the Arzel\`a--Ascoli Theorem (see, e.g., \cite{SL2006}).

\begin{lemma}\label{l10}
 Let $U\subset \PCr$. Then $U$ is relatively compact if and only if
 \begin{itemize}
  \item[(i)] $U$ is uniformly bounded, i.e., there exists a constant $R>0$ such that
  $|x|_\PCr \leq R$ for $x\in U$,
  \item[(ii)] $U$ is quasiequicontinuous on $[-r,\alpha]$, i.e., for every $\epsilon>0$ there exists $\delta>0$
  such that $|x(t)-x(\bar t)|\leq \epsilon$ for $x\in U$ and $t,\bar t\in\calI_k'$ for $k=-\ell_0-1,\ldots,{k_0}$ and $|t-\bar t|\leq\delta$.
 \end{itemize}
\end{lemma}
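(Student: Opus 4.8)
The plan is to reduce the statement to the classical Arzel\`a--Ascoli theorem applied separately on each of the closed subintervals $\overline{\calI_k'}=[t_k,t_{k+1}]$. The key observation is that a piecewise continuous function in $\PCr$ is completely determined by, and can be reconstructed from, a tuple of genuinely continuous functions on the closed pieces, so that $\PCr$ is isometrically isomorphic to a finite product of spaces of continuous functions, on which Arzel\`a--Ascoli applies coordinatewise.

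First I would set up the identification. For $x\in\PCr$ and each index $k=-\ell_0-1,\ldots,k_0$ define $x_k\sth\overline{\calI_k'}\to\setR^n$ by $x_k(t)=x(t)$ for $t\in[t_k,t_{k+1})$ and $x_k(t_{k+1})=x(t_{k+1}-)$, the left-hand limit, which exists by the definition of $\PCr$; for $k=k_0$ the interval is already closed and $x_{k_0}=x|_{[t_{k_0},\alpha]}$. Each $x_k$ is continuous on the compact interval $\overline{\calI_k'}$, so $\Phi(x)=(x_{-\ell_0-1},\ldots,x_{k_0})$ maps $\PCr$ into the finite product $\prod_k C(\overline{\calI_k'},\setR^n)$. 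I would then check that $\Phi$ is a linear isometric isomorphism when the product carries the maximum norm $\max_k |x_k|_{C(\overline{\calI_k'})}$: it is linear and injective, it is surjective since any tuple of continuous pieces reassembles, by right-continuity, into an element of $\PCr$, and it is isometric because $|x(t_{k+1}-)|\leq\sup_{[t_k,t_{k+1})}|x|$ forces $\sup_{\overline{\calI_k'}}|x_k|=\sup_{\calI_k'}|x|$, whence $|x|_\PCr=\max_k|x_k|_{C(\overline{\calI_k'})}$.

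Since relative compactness is preserved by isometric isomorphisms, $U$ is relatively compact in $\PCr$ if and only if $\Phi(U)$ is relatively compact in the product. In a finite product of metric spaces with the maximum metric, a set is relatively compact if and only if each coordinate projection is relatively compact, because the product of the compact coordinate closures is compact and contains the closure of the set. Hence the problem splits into showing that for every $k$ the family $\{x_k\sth x\in U\}$ is relatively compact in $C(\overline{\calI_k'},\setR^n)$, which by the classical Arzel\`a--Ascoli theorem is equivalent to this family being uniformly bounded and equicontinuous on $\overline{\calI_k'}$.

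Finally I would translate the two conditions back through the isometry. Uniform boundedness of all the families $\{x_k\}$ is, via $|x|_\PCr=\max_k|x_k|$, exactly condition (i). Equicontinuity of $\{x_k\}$ on the closed interval $\overline{\calI_k'}$ for each $k$ is exactly the quasiequicontinuity (ii), the only subtlety being the right endpoint $t_{k+1}$: since $x_k(t_{k+1})=x(t_{k+1}-)$, an estimate $|x(t)-x(\bar t)|\leq\epsilon$ valid for $t,\bar t\in\calI_k'$ with $|t-\bar t|\leq\delta$ passes to the limit to cover $t_{k+1}$, so (ii) on the half-open pieces is equivalent to equicontinuity on the closed pieces. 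The main, and essentially only, point to watch is precisely this endpoint bookkeeping at the discontinuity moments---verifying that the continuous extension is well defined and that the half-open equicontinuity condition (ii) genuinely upgrades to equicontinuity on the closed intervals; everything else is the standard Arzel\`a--Ascoli argument transported across the isomorphism.
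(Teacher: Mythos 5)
Your proof is correct. Note that the paper does not actually prove this lemma at all---it states that it ``is easy to check'' and cites a reference---so there is no in-paper argument to compare against; your write-up supplies exactly the standard argument that such references contain: identify $\PCr$ isometrically with the finite product $\prod_k C(\overline{\calI_k'},\setR^n)$ by extending each piece to the closed interval via its left-hand limit, use that relative compactness in a finite product with the max metric is equivalent to coordinatewise relative compactness, and invoke the classical Arzel\`a--Ascoli theorem on each factor. You also handle the one genuine subtlety correctly: the half-open quasiequicontinuity condition (ii) upgrades to equicontinuity on the closed pieces because, for $|t-t_{k+1}|\leq\delta$, the bound $|x(t)-x(s)|\leq\epsilon$ holds for all $s\in(t,t_{k+1})$ and survives the limit $s\to t_{k+1}-$, and the isometry $|x|_{\PCr}=\max_k|x_k|_{C(\overline{\calI_k'})}$ holds since $|x(t_{k+1}-)|\leq\sup_{\calI_k'}|x|$. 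No gaps.
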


Our proofs   below  will rely on the following estimate, which
generalizes a delayed version of the Gronwall's lemma for an impulsive integral inequality.
\medskip

\begin{lemma}[\cite{Ha2024}, Lemma 2.3]\label{l00}
Let  $a_0,a_1,a_2, b, c\geq0$,  $0<t_1<\cdots<t_K<T$, $r>0$ be fixed,
and let $t_0=0$ and $t_{K+1}=T$.
Suppose a function  $u\sth[-r,T]\to \setR^n$ is continuous on the intervals $[t_k,t_{k+1})$ for  $k=0,\ldots,K-1$ and
on $[-r,0]$ and $[t_K,T]$, has finite left-limits $u(t_k-)$ at the points $t_k$ for $k=1,\ldots,K$,
and satisfies
\begin{align*}
|u(t)|&\leq a_0,\qquad t\in[-r,0],\\
|u(t)|&\leq |u(t_k)|+a_1 +b\int_{t_k}^t \sup_{-r\leq \zeta\leq s}|u(\zeta)|\,ds,\qquad t\in[t_k,t_{k+1}),\quad k=0,\ldots,K,
\end{align*}
and
$$
|\Delta u(t_k)|\leq c|u(t_k-)|+a_2,\qquad k=1,\ldots,K.
$$
Then
\begin{equation}\label{e303}
|u(t)|\leq  \sum_{j=0}^K(1+c)^j (a_0+a_1+a_2)e^{bt},\quad t\in[0,T].
\end{equation}
\end{lemma}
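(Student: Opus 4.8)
The plan is to prove the bound in \eref{e303} by induction on the impulsive intervals, combining the classical Gronwall argument on each interval with the jump conditions at the impulsive moments. The main idea is to track the quantity $v(t):=\sup_{-r\le\zeta\le t}|u(\zeta)|$ rather than $|u(t)|$ directly, since the integral inequality involves this supremum and since the sup is what couples the delayed history to the current value.

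First I would set up the base interval. On $[-r,0]$ we have $|u(t)|\le a_0$, so $v(0)=\sup_{-r\le\zeta\le0}|u(\zeta)|\le a_0$. On the first interval $[t_0,t_1)=[0,t_1)$, the hypothesis gives
\begin{equation*}
|u(t)|\le |u(0)|+a_1+b\int_0^t v(s)\,ds\le a_0+a_1+b\int_0^t v(s)\,ds.
\end{equation*}
The key technical step is to convert this into a differential inequality for $v$ itself: since $v$ is nondecreasing and the right-hand side above is nondecreasing in $t$, one can show that $v(t)$ satisfies the same bound, i.e. $v(t)\le a_0+a_1+b\int_0^t v(s)\,ds$ for $t\in[0,t_1)$. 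Applying the integral form of Gronwall's lemma then yields $v(t)\le(a_0+a_1)e^{bt}$ on $[0,t_1)$, and in particular a bound on $v(t_1-)$.

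Next I would handle the passage across an impulse and iterate. At $t_k$ the jump condition gives $|u(t_k)|\le|u(t_k-)|+|\Delta u(t_k)|\le(1+c)|u(t_k-)|+a_2$. Feeding this into the integral inequality on the next interval $[t_k,t_{k+1})$ and again passing to the supremum, I would obtain a recursion of the form
\begin{equation*}
v(t)\le v(t_k)+a_1+a_2+b\int_{t_k}^t v(s)\,ds,\qquad t\in[t_k,t_{k+1}),
\end{equation*}
whose Gronwall solution, together with the inductive bound on $v(t_k)$ carrying the factor $(1+c)$ from each previous jump, accumulates the factors $\sum_{j=0}^k(1+c)^j$ and the exponential $e^{bt}$. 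Induction on $k=0,1,\dots,K$ then gives precisely \eref{e303}.

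The step I expect to be the main obstacle is the reduction from the inequality for $|u(t)|$ to the same inequality for the running supremum $v(t)$, since $v$ incorporates the history on $[-r,t]$ and one must verify carefully that taking the sup does not spoil the bound — in particular that the contribution of the past values $u(\zeta)$ for $\zeta<t_k$ is already controlled by the inductively established bound on earlier intervals, and that $v$ is left-continuous enough for the Gronwall estimate to apply on the half-open intervals. A careful bookkeeping of the constants $a_0,a_1,a_2$ and the jump factors $(1+c)^j$ across the $K$ impulses is the other place where errors could creep in, but this is routine once the supremum reduction is in place.
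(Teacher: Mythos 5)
The paper itself contains no proof of this lemma: it is imported verbatim from \cite{Ha2024} (Lemma 2.3), so there is no in-paper argument to compare with, and your proposal must be judged on its own. It is correct, and it is the natural interval-by-interval Gronwall argument one would expect. The step you flag as the main obstacle is in fact harmless: for $t\in[t_k,t_{k+1})$ and any $\zeta\in(t_k,t]$ the hypothesis gives $|u(\zeta)|\le |u(t_k)|+a_1+b\int_{t_k}^{\zeta}v(s)\,ds\le v(t_k)+a_1+b\int_{t_k}^{t}v(s)\,ds$, and this right-hand side also dominates $v(t_k)$, so $v(t)=\max\{v(t_k),\,\sup_{t_k<\zeta\le t}|u(\zeta)|\}$ obeys the same inequality; since $v$ is nondecreasing and bounded (hence measurable), the integral form of Gronwall's lemma applies on each half-open interval, and $v(t_k-)=\sup_{-r\le\zeta<t_k}|u(\zeta)|$ exists by monotonicity with $|u(t_k-)|\le v(t_k-)$, which is all that is needed at the jumps, where $v(t_k)\le(1+c)v(t_k-)+a_2$. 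The constant bookkeeping closes as you predict: with $S_k=\sum_{j=0}^{k}(1+c)^j$, the inductive bound $v\le S_{k-1}(a_0+a_1+a_2)e^{bt}$ on $[t_{k-1},t_k)$ and Gronwall on $[t_k,t_{k+1})$ give $v(t)\le\bigl((1+c)S_{k-1}(a_0+a_1+a_2)e^{bt_k}+a_1+a_2\bigr)e^{b(t-t_k)}\le S_k(a_0+a_1+a_2)e^{bt}$, using $S_k=1+(1+c)S_{k-1}$ and $a_1+a_2\le a_0+a_1+a_2$. Two cosmetic points only: the $a_2$ in your displayed recursion is redundant, since the jump contribution is already absorbed into $v(t_k)$; and the endpoint $t=T$ is not covered by the hypothesis (which runs over $[t_K,T)$) but is recovered by continuity of $u$ on $[t_K,T]$ together with continuity of the bounding function.
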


\bigskip

An  absolutely continuous function $u\sth[a,b]\to\setR$ is called
 \emph{piecewise strictly monotone} on $[a,b]$ if  there exists a finite mesh
$a=s_0<s_1<\cdots<s_{m-1}<s_m=b$ of $[a,b]$ such that for all $i=0,1,\ldots,m-1$
 either
$$
\essinf \{ \dot u(s)\sth s\in[a',b']\}>0,\qquad \mbox{for all }\ [a',b']\subset(s_i,s_{i+1})
$$
or
$$
\esssup \{ \dot u(s)\sth s\in[a',b']\}<0,\qquad \mbox{for all }\ [a',b']\subset(s_i,s_{i+1}).
$$
This property was essential in \cite{Ha2013b} to prove differentiability of the solutions of a SD-DDE with
piecewise strictly monotone delayed time function. An example was presented that in the lack of this property
the differentiability result may fail.


\section{Existence and uniqueness of solutions}\label{s3}
\setcounter{equation}0
\setcounter{theorem}0

Let $T>0$ be a finite constant.
For given fixed impulsive time moments $0<t_1<t_2<\cdots<t_K<T$, let $\calT:=\{ t_1,\ldots,t_K\}$
denote the set of the impulsive time moments.

Consider the nonlinear SD-DDE
\begin{equation}\label{e1}
\dot x(t) = f(t,x(t),x(t-\tau(t))),\qquad \mbox{a.e.}\ t\in [0,T],
\end{equation}
where the delay function is defined by the adaptive condition
\begin{equation}\label{e1b}
 \dot \tau(t) = g(t,x(t),\tau(t)),\qquad \mbox{a.e.}\ t\in [0,T],
\end{equation}
the impulsive conditions are
\begin{equation}\label{e1c}
 \Delta x(t_k) = I_k(x(t_k-)),\qquad k=1,2,\ldots,K,
\end{equation}
and the  initial conditions are
\begin{equation}\label{e2a}
x(t) = \phi(t),\qquad t\in(-\infty,0],
\end{equation}
and $\tau(0)=\lambda$, where $\lambda>0$ is a constant.
For simplicity of the  notations later, we extend $\tau$ to $(-\infty,0)$ by a constant value, and consider the extended
initial condition
\begin{equation}\label{e2}
\tau(t) = \lambda,\qquad t\in(-\infty,0].
\end{equation}

Next we list our assumptions.
\begin{itemize}
\item[(A1)]  
\begin{itemize}
\item[(i)] $f\, :\, \setR\times \setR^n\times\setR^n\supset[0,T]\times\setR^n\times\setR^n\to\setR^n$ is continuous,
\item[(ii)] $f$ is locally Lipschitz continuous wrt its second and third arguments, i.e.,
    for every $M>0$ there exists $L_1=L_1(M)\geq0$ such that
    $$
   |f(t,u,v)-f(t,\bar u,\bar v)|\leq L_1\Bigl(|u-\bar u|+|v-\bar v|\Bigr),\qquad t\in[0,T],\ u,\bar u,v,\bar v\in\cball{\setR^n}{M}.
    $$
\end{itemize}
\item[(A2)]
\begin{itemize}
\item[(i)] $g\, :\, \setR\times \setR^n\times\setR\supset[0,T]\times\setR^n\times\setR	\to\setR$ is continuous,
\item[(ii)] the function $g$ satisfies
$$
g(t,u,0)>0,\qquad t\in [0,T],\quad u\in \setR^n,
$$
\item[(iii)] $g$ is locally Lipschitz continuous wrt its second and third arguments, i.e., for every $M>0$
    there exists a constant $L_2=L_2(M)\geq0$ such that
    $$
     |g(t,u,w)-g(t,\bar u,\bar w)|\leq L_2\Bigl(|u-\bar u|+|w-\bar w|\Bigr),\qquad
     t\in[0,T],\ u,\bar u\in\cball{\setR^n}{M},\ w,\bar w\in[-M,M].
    $$
\end{itemize}
\item[(A3)]
The functions  $I_k\sth\setR^n\to\setR^n$ are globally Lipschitz continuous for $k=1,\ldots,K$, i.e.,
there exists a constant $L_3\geq 0$ such that
 $$
 |I_k(u)-I_k(\bar u)|\leq L_3|u-\bar u|,\qquad u,\bar u\in\setR^n,\quad k=1,2,\ldots,K.
 $$
\item[(A4)]
\begin{itemize}
\item[(i)] The initial function  $\phi\sth(-\infty,0]\to\setR^n$ is locally piecewise Lipschitz continuous, i.e.,
there exists a sequence $t_{-k}$ ($k\in\setN_0$), where $t_0=0$, and the strictly monotone decreasing sequence $t_{-k}$ tends to $-\infty$ as $k\to\infty$, and $\phi$ can have jump discontinuities only at the points $t_{-k}$, $\phi$ is right-continuous at $t_{-k}$ for $k\in\setN_0$, and
for every $M>0$ there exists a constant $L_4=L_4(M)\geq 0$ such that
 $$
 |\phi(s)-\phi(\bar s)|\leq L_4|s-\bar s|,\qquad s,\bar s\in [t_{-k-1},t_{-k})\cap[-M,0],\quad k\in\setN_0.
 $$
\item[(ii)]  $\phi$ is bounded on $(-\infty,0]$, i.e., there exists $N_\phi\geq0$ such that
 $$
 |\phi(s)|\leq N_\phi,\qquad s\in(-\infty,0].
 $$
 \end{itemize}
 \end{itemize}
\bigskip

Consider the jump discontinuity points of the initial function $t_{-k}$ ($k\in\setN_0$), and define $\calT^*=\calT\cup\{t_{-k}\sth k\in\setN_0\}$.
To simplify  the notation, we also introduce  $t_{K+1}:=T$,
and we define the intervals
$$
\calI_k:=[t_k,t_{k+1})\qquad \mbox{for}\ k=\cdots,-1,0,1,\ldots,K-1,\qquad \mbox{and}\quad \calI_K:=[t_k,T].
$$
For a fixed stepsize $h>0$ we introduce the notation
$$
[t]_h = \left[\frac th\right]\!h,
$$
where  $[\cdot]$ denotes the greatest integer part function. Then $t-h<[t]_h\leq t$, and hence
\begin{equation}\label{e216}
|[t]_h-t|\leq h,\qquad t\in\setR.
\end{equation}
The mesh points of our numerical approximation will be the points of the set $\setN_0h$.

Motivated by the numerical approximation scheme with the help of EPCAs
introduced by I.~Gy{\H o}ri in \cite{Gy1991}, and later studied in \cite{GyHT1995,Ha2022,HHT1997} for different classes of FDEs,
we consider the approximate system of impulsive delayed EPCA with an  adaptive delay  defined by
\begin{align}
 \dot x_h(t) &= f([t]_h,x_h([t]_h),x_h([t]_h-[\tau_h([t]_h)]_h)),\qquad t\in[0,T],\label{ae1}\\
 \dot \tau_h(t) &= g([t]_h,x_h([t]_h),\tau_h([t]_h)),\qquad t\in [0,T],\label{ae1b}\\
 \Delta x_h(t_k) &= I_k(x_h(t_k-)),\qquad k=1,2,\ldots,K,\label{ae1c}\\
 x_h(t) &= \phi(t),\qquad t\in(-\infty,0],\label{ae2a}\\
 \tau_h(t) &= \lambda,\qquad t\in(-\infty,0].\label{ae2}
\end{align}

The function $x_h$ has jump discontinuities at $t_k\in\calT$, it is right-continuous at $t_k$, and $\tau_h$ is
a continuous function. At a point $t\geq0$ where  $[t]_h\in\calT$ or $[t]_h-[\tau_h([t]_h)]_h\in\calT^*$ or for $t\in\setN_0h$, we interpret the derivatives in \eref{ae1} and \eref{ae1b} as right-derivatives.
\smallskip

We define the positive constant
\begin{equation}\label{e261}
h_0=\min\{t_{k+1}-t_k\sth k=0,\ldots,K\}.
\end{equation}
If $0<h<h_0$, then  between two consecutive mesh points $jh$ and $(j+1)h$, there is at most one impulsive time moment $t_k$, so
in this paper we always assume that $0<h<h_0$.
First consider the following remark about the computation and the existence of
solutions of \eref{ae1}--\eref{ae2}.

\begin{remark}\label{rem1}\rm
Suppose $t_k\leq jh<(j+1)h\leq t_{k+1}$. Then
integrating equations \eref{ae1} and \eref{ae1b} from $jh$ to $t\in(jh,(j+1)h)$, and taking the limit $t\to (j+1)h-$ we get
\begin{align*}
 x_h((j+1)h-)&=x_h(jh)+hf(jh,x_h(jh),x_h(jh-[\tau_h(jh)]_h)),\\
 \tau_h((j+1)h)&=\tau_h(jh)+hg(jh,x_h(jh),\tau_h(jh)).
\end{align*}
If $jh<t_{k}<(j+1)h$, then similarly to the previous calculation, and using the impulsive condition \eref{ae1c}, we
obtain
\begin{align*}
 x_h(t_k-)&=x_h(jh)+(t_{k}-jh)f(jh,x_h(jh),x_h(jh-[\tau_h(jh)]_h)),\\
 x_h(t_k) &= x_h(t_k-)+I_k(x_h(t_k-)),\\
 x_h((j+1)h-)&=x_h(t_k)+((j+1)h-t_{k})f(jh,x_h(jh),x_h(jh-[\tau_h(jh)]_h)),\\
 \tau_h((j+1)h)&=\tau_h(jh)+hg(jh,x_h(jh),\tau_h(jh)).
\end{align*}
Therefore, if $\tau_h(jh)\geq 0$ for  $j=0,1,\ldots,j_0$, then the \IIVP{ae1}{ae2} has a unique solution on the interval $(-\infty,(j_0+1)h]$.
If $\tau(jh)<0$ for some $j\in\setN$, then the relation defining $x_h((j+1)j-)$ is no longer an explicit recursion, so the existence
of $x_h((j+1)h-)$ is not obvious.
\end{remark}

By a solution of the \IIVP{ae1}{ae2} on $(-\infty,\alpha]$ for some $\alpha\leq T$  we mean a pair of functions
$(x_h,\tau_h)$, where
the function $\tau_h$ is continuous on
$[0,\alpha]$, and it is linear between consecutive mesh points $(\setN_0h\cap[0,\alpha])\cup\{\alpha\}$;
the function $x_h$ is continuous on the intervals
$\calI_k\cap[0,\alpha]$ for $k=0,1,\ldots,K$, and  it is linear between consecutive mesh points and impulsive time
moments $(\setN_0h\cap[0,\alpha])\cup\{\alpha\}\cup(\calT\cap[0,\alpha])$; $x_h$ has finite
left-sided limit, and it is right-continuous at each impulsive time moments $t_k\in\calT\cap[0,\alpha]$; and
\eref{ae1}--\eref{ae2} are satisfied.
\medskip

The following result will be essential in the proof of the existence and uniqueness theorem.

\begin{lemma}\label{l9}
 Assume (A1)--(A4), and let $\lambda>0$. Then there exist positive constants $h^*\leq h_0$ and $\alpha\leq T$
 such that $\alpha\not\in\calT$, and for $0<h<h^*$
the \IIVP{ae1}{ae2} has a unique solution $(x_h(t),\tau_h(t))$ on $[0,\alpha]$.
 Moreover,   there exist nonnegative constants $M_1,M_2,M_3$
such that  for $0<h<h^*$
\begin{align}
\tau_h(t)&>0,\qquad t\in[0,\alpha],\label{e414a}\\
|x_h(t)|+\tau_h(t)&< M_1,\qquad t\in(-\infty,\alpha],\label{e414}\\
|x_h(t)-x_h(\bar t)|&\leq M_2|t-\bar t|,\qquad  t,\bar t\in\calI_k\cap[0,\alpha],\ \, k=0,\ldots,K,\quad t,\bar t\in\calI_{-k}\cap[-M_1,0],\ \, k\in\setN_0\label{e415}\\
|\tau_h(t)-\tau_h(\bar t)|&\leq M_3|t-\bar t|,\qquad  t,\bar t\in[0,\alpha],\label{e415b}
\end{align}
\end{lemma}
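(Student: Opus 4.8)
My plan is to construct the approximate solution node by node over the mesh points in $[0,\alpha]$ and to establish existence, uniqueness and all four estimates \eref{e414a}--\eref{e415b} simultaneously by an induction, after fixing the geometric constants in the right order. First I would fix $M_1$ strictly larger than the size of the initial data: since $|x_h(t)|=|\phi(t)|\le N_\phi$ and $\tau_h(t)=\lambda$ on $(-\infty,0]$ by \eref{ae2a}--\eref{ae2} and (A4)(ii), any choice $M_1>N_\phi+\lambda$ makes $|x_h(t)|+\tau_h(t)\le N_\phi+\lambda<M_1$ there, leaving a fixed slack $M_1-(N_\phi+\lambda)>0$. With $M_1$ frozen, continuity of $f$ and $g$ from (A1)(i) and (A2)(i) gives finite sup-bounds $N_f:=\sup|f|$ and $N_g:=\sup|g|$ over the compact sets $[0,T]\times\cball{\setR^n}{M_1}\times\cball{\setR^n}{M_1}$ and $[0,T]\times\cball{\setR^n}{M_1}\times[-M_1,M_1]$, while (A1)(ii) and (A2)(iii) supply the Lipschitz constants $L_1(M_1),L_2(M_1)$. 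The decisive point is that all of these quantities depend only on $M_1$ and not on the stepsize $h$, which is exactly what will make every estimate uniform in $h$.

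\medskip

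Next I would choose $\alpha\le T$ with $\alpha\notin\calT$ small enough to meet two rate conditions. Because $\tau_h$ is piecewise linear with slope $g([t]_h,x_h([t]_h),\tau_h([t]_h))$, as long as the arguments stay in the compact set above we have $|\dot\tau_h|\le N_g$, hence $\tau_h(t)\ge\lambda-N_g t$; requiring $\alpha<\lambda/N_g$ then forces $\tau_h>0$ on $[0,\alpha]$ and gives \eref{e414a}. Note that this short-time positivity follows solely from $\lambda>0$ and the rate bound, so the sign condition (A2)(ii) is not actually needed at this local stage. Similarly $|\dot x_h|\le N_f$ on the continuity intervals, so $|x_h(t)|+\tau_h(t)$ grows at rate at most $N_f+N_g$ away from the impulse points; shrinking $\alpha$ so that this growth stays below the slack $M_1-(N_\phi+\lambda)$ keeps the trajectory inside the ball and gives \eref{e414}. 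If one takes $\alpha<t_1$ there are no impulses in $[0,\alpha]$ and this is immediate; to allow $\alpha$ past some $t_k$ I would instead invoke the impulsive Gronwall estimate of Lemma~\ref{l00} with $c=L_3$ (from (A3)) to absorb the finitely many jumps $I_k(x_h(t_k-))$, at the cost of enlarging $M_1$ accordingly. Finally I would take $h^*\le h_0$ small, so that by \eref{e261} each mesh interval contains at most one impulse and the $O(h)$ errors from $|[t]_h-t|\le h$ fit inside the slack.

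\medskip

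The heart of the argument is the inductive step, and the main obstacle is closing the induction through the state-dependent delay uniformly in $h$. Assume $(x_h,\tau_h)$ has been built on $[0,jh]$ with $\tau_h>0$ and the ball bound valid there. Since $\tau_h(jh)>0$ we have $[\tau_h(jh)]_h\ge0$, so the delayed node $jh-[\tau_h(jh)]_h$ is an integer multiple of $h$ not exceeding $jh$ and lying in $(jh-M_1,jh]\subset(-M_1,\alpha]$; hence $x_h$ at that node is already known (it equals $\phi$ for negative arguments and a previously computed value otherwise) and is bounded by $\max\{N_\phi,M_1\}=M_1$, so the argument of $f$ stays in the compact ball. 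This is precisely where positivity of $\tau_h$ is indispensable: it converts the potentially implicit relation flagged in Remark~\ref{rem1} into the explicit recursion of that remark, which then determines $x_h((j+1)h-)$, the post-jump value $x_h((j+1)h)$ via \eref{ae1c} when an impulse falls in the step, and $\tau_h((j+1)h)$ uniquely. Verifying that positivity and the ball bound propagate to $(j+1)h$ uses only the rate estimates of the previous paragraph, so the induction closes and yields a \emph{unique} solution on $[0,\alpha]$.

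\medskip

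The Lipschitz bounds then read off the construction. On each interval $\calI_k\cap[0,\alpha]$ the function $x_h$ is piecewise linear with slopes bounded by $N_f$, while on the initial intervals $\calI_{-k}\cap[-M_1,0]$ it coincides with $\phi$, which is Lipschitz with constant $L_4(M_1)$ by (A4)(i); hence \eref{e415} holds with $M_2=\max\{N_f,L_4(M_1)\}$. Likewise $\tau_h$ is piecewise linear with slopes bounded by $N_g$, so \eref{e415b} holds with $M_3=N_g$. Since $M_1,M_2,M_3,\alpha$ and $h^*$ were fixed through $M_1$, $N_f$ and $N_g$ alone, they are independent of $h\in(0,h^*)$, as required.
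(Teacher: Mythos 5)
Your construction is correct in substance, but it settles the one genuinely delicate point --- why $\tau_h$ stays positive at mesh points, so that the recursion of Remark~\ref{rem1} remains explicit --- by a different mechanism than the paper. You shrink the interval: since $|\dot\tau_h|\leq N_g$ while the arguments stay in the compact set, $\tau_h(t)\geq\lambda-N_gt$, so taking $\alpha<\lambda/N_g$ forces positivity, and indeed (A2)(ii) then plays no role. The paper keeps $\alpha$ decoupled from $\lambda$: it uses (A2)(ii) and the uniform continuity of $g$ to produce a barrier $\delta_0>0$ with $g(t,u,w)\geq A_0/2>0$ whenever $u\in\cball{\setR^n}{M_1}$ and $0\leq w<\delta_0$, and then ties the \emph{stepsize} to this barrier, $h^*=\min\{\delta_0/M_3,h_0\}$. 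If $\tau_h$ vanished first at some $\alpha_h^*\leq\alpha$, the last mesh point $j_0h$ before $\alpha_h^*$ would satisfy $0<\tau_h(j_0h)\leq M_3(\alpha_h^*-j_0h)<\delta_0$, whence $\dot\tau_h=g(j_0h,x_h(j_0h),\tau_h(j_0h))>0$ on that mesh interval, contradicting $\tau_h(\alpha_h^*)=0$. So in the paper positivity costs a restriction on $h$, not on $\alpha$: there $\alpha$ is constrained only by the Gronwall-type condition \eref{e231}, whereas your $\alpha$ degenerates as $\lambda\to0+$ or as $N_g(M_1)$ grows. Both choices satisfy the lemma as stated (it only asserts existence of \emph{some} $\alpha$ and $h^*$), and your sup-norm rate bounds in place of the paper's Lipschitz estimates fed into Lemma~\ref{l00} (with $b=2L$) are an equally legitimate, indeed more elementary, way to obtain \eref{e414}; the price is the weaker, $\lambda$-dependent time of existence.

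One point needs reordering to be rigorous: you fix $M_1>N_\phi+\lambda$, define $N_f$, $N_g$ (and implicitly $\alpha$) from this $M_1$, and only afterwards say you would ``enlarge $M_1$'' to absorb the impulses via Lemma~\ref{l00}. Since $N_f$, $N_g$, the slack, and your admissible $\alpha$ all depend on $M_1$, the enlargement must come first: fix
$M_1>\sum_{j=0}^K(1+L_3)^j\bigl(N_\phi+\lambda+\max_{k=1,\ldots,K}|I_k(0)|\bigr)$
at the outset (this is essentially how the paper sets $M_1$, with room for the growth terms), then define $N_f(M_1)$, $N_g(M_1)$, $L_4(M_1)$, and only then choose $\alpha\not\in\calT$ small enough for both of your rate conditions. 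With that reordering your induction closes exactly as you describe, and your constants $M_2=\max\{N_f,L_4(M_1)\}$ and $M_3=N_g$ give \eref{e415} and \eref{e415b} as in the paper.
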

\begin{proof}
Let $L_3$ be the Lipschitz constant from (A3), $N_\phi>0$ be the constant from (A4) (ii), and
fix a constant $M_1$ so that
$$
M_1>\Bigl(N_\phi+\lambda+T\max_{s\in[0,T]}|f(s,0,0)|+T\max_{s\in[0,T]}|g(s,0,0)|+\max_{k=1,\ldots,K}|I_k(0)|\Bigr)
\sum_{j=0}^K(1+L_3)^j.
$$
Consider the Lipschitz constants $L_1=L_1(M_1)$ and $L_2=L_2(M_1)$ from (A1) (ii) and (A2) (iii), respectively, and define
$$
L=L_1+L_2.
$$
Fix $0<\alpha\leq T$ such that $\alpha\not\in\calT$, and
\begin{equation}\label{e231}
\Bigl(N_\phi+\lambda+T\max_{s\in[0,T]}|f(s,0,0)|+T\max_{s\in[0,T]}|g(s,0,0)|+\max_{k=1,\ldots,K}|I_k(0)|\Bigr)
\sum_{j=0}^K(1+L_3)^j e^{2L\alpha}< M_1.
\end{equation}
It follows from Remark~\ref{rem1} that if $\tau_h$ takes a nonnegative value at a mesh point, then
$x_h$ and $\tau_h$ is uniquely defined at the next mesh point, so the solution can be extended to a longer interval.
Therefore, to prove that the   \IIVP{ae1}{ae2} has a unique solution on $[0,\alpha]$ for some $\alpha>0$,
it is enough to show that it generates a positive function $\tau_h$ on $[0,\alpha]$.

Since we assumed that $\tau_h(0)=\lambda>0$, $\tau_h(t)$ is positive for small $t$. Suppose there exists  $0<\alpha^*_h\leq \alpha$
such that $(x_h,\tau_h)$ exists on $[0,\alpha^*_h]$, and
\begin{equation}\label{e232}
\tau_h(t)>0,\qquad t\in[0,\alpha^*_h),\qquad\mbox{and}\qquad \tau_h(\alpha^*_h)=0.
\end{equation}
Note that the definitions of $N_\phi$ and $M_1$ yield  $|x_h(0)|+|\tau_h(0)|\leq N_\phi+\lambda<M_1$, so for small $t$ it follows
$|x_h(t)|+|\tau_h(t)|<M_1$.
We claim that
\begin{equation}\label{e888}
 |x_h(t)|+|\tau_h(t)|<M_1,\qquad t\in[0,\alpha^*_h].
\end{equation}
Suppose  that there exists $0<T^*_h\leq \alpha^*_h$ such that
\begin{equation}\label{e233}
|x_h(t)|+|\tau_h(t)|<M_1,\qquad t\in[0,T^*_h),\qquad\mbox{and}\qquad |x_h(T^*_h)|+|\tau_h(T^*_h)|\geq M_1.
\end{equation}

Since $x_h$ and $\tau_h$ are continuous on $\calI_k$, \eref{ae1} and \eref{ae1b} yield for $t\in\calI_k\cap [0,T^*_h]$
and $k=0,1,\ldots,K$
\begin{align}
 x_h(t) &= x_h(t_k)+\int_{t_k}^t f([s]_h,x_h([s]_h),x_h([s]_h-[\tau_h([s]_h)]_h))\,ds,\label{e281}\\
 \tau_h(t) &= \tau_h(t_k)+\int_{t_k}^t g([s]_h,x_h([s]_h),\tau_h([s]_h))\,ds.\label{e282}
\end{align}
Since \eref{e233} holds, we can use (A1) (ii) in the following estimate
for $t\in\calI_k\cap [0,T^*_h]$ and $k=0,1,\ldots,K$
\begin{align*}
 |x_h(t)| &\leq |x_h(t_k)|+\int_{t_k}^t |f([s]_h,0,0)|\,ds\\
 &\quad+ \int_{t_k}^t \Bigl|f([s]_h,x_h([s]_h),x_h([s]_h-[\tau_h([s]_h)]_h))-f([s]_h,0,0)\Bigr|\,ds\\
&\leq |x_h(t_k)|+T\max_{s\in[0,T]}|f(s,0,0)|
+ \int_{t_k}^t L_1\Bigl(|x_h([s]_h)|+|x_h([s]_h-[\tau_h([s]_h)]_h)|\Bigr)\,ds.
 \end{align*}
 Similarly, we have for $t\in\calI_k\cap [0,T^*_h]$ and $k=0,1,\ldots,K$
\begin{align*}
 |\tau_h(t)| &\leq |\tau_h(t_k)|+\int_{t_k}^t |g([s]_h,0,0)|\,ds
 + \int_{t_k}^t \Bigl|g([s]_h,x_h([s]_h),\tau_h([s]_h))-g([s]_h,0,0)\Bigr|\,ds\\
&\leq |\tau_h(t_k)|+T\max_{s\in[0,T]}|g(s,0,0)|
+ \int_{t_k}^t L_2\Bigl(|x_h([s]_h)|+|\tau_h([s]_h)|\Bigr)\,ds.
 \end{align*}
 Adding the two estimates and introducing the notation
 \begin{align*}
 \omega_h(t) &= |x_h(t)|+|\tau_h(t)|,\qquad t\in(-\infty,\alpha_h^*]
 \end{align*}
 we get for $t\in\calI_k\cap [0,T^*_h]$ and $k=0,1,\ldots,K$
$$
\omega_h(t)
\leq \omega_h(t_k)+T\Bigl(\max_{s\in[0,T]}|f(s,0,0)|+\max_{s\in[0,T]}|g(s,0,0)|\Bigr)
+ \int_{t_k}^t L\Bigl(\omega_h([s]_h)+\omega_h([s]_h-[\tau_h([s]_h)]_h)\Bigr)\,ds.
$$
 It follows  from \eref{e233} that
 $$
 [s]_h-[\tau_h([s]_h)]_h\geq -\tau_h([s]_h)\geq -M_1,\qquad s\in \calI_k\cap [0,T^*_h],\quad k=0,\ldots,K,
 $$
 hence for $t\in\calI_k\cap [0,T^*_h]$ and $k=0,1,\ldots,K$
\begin{align}
\omega_h(t)
&\leq \omega_h(t_k)+T\Bigl(\max_{s\in[0,T]}|f(s,0,0)|+\max_{s\in[0,T]}|g(s,0,0)|\Bigr)
+ \int_{t_k}^t 2L\max_{-M_1\leq\zeta\leq s}\omega_h(\zeta)\,ds.\label{e213a}
 \end{align}
The continuity of $\tau_h$, the impulsive condition \eref{ae1c} and assumption (A3) yield
for $t_k\leq T^*_h$
\begin{align*}
\Delta\omega_h(t_k)&=  \Bigl| |x_h(t_k)|-|x_h(t_k-)|\Bigr|\\
&\leq |\Delta x_h(t_k)|\\
&\leq|I_k(x_h(t_k-))-I_k(0)|+|I_k(0)|\\
&\leq L_3|x_h(t_k-)|+|I_k(0)|\\
&\leq L_3\omega_h(t_k-)+|I_k(0)|.
\end{align*}
The initial conditions \eref{ae2a} and \eref{ae2} and (A4) give
$$
\omega_h(t)=|x_h(t)|+|\tau_h(t)|=|\phi(t)|+\lambda\leq N_\phi+\lambda,\qquad t\in[-M_1,0].
$$
We apply  Lemma~\ref{l00} with $r=M_1$, $a_0=N_\phi+\lambda$, $a_1=T\Bigl(\max_{s\in[0,T]}|f(s,0,0)|+\max_{s\in[0,T]}|g(s,0,0)|\Bigr)$, $a_2=\max_{k=1,\ldots,K}|I_k(0)|$, $b=2L$
and $c=L_3$ to estimate \eref{e213a}, hence we get
\begin{equation}\label{e208a}
 \omega_h(t)\leq \Bigl(N_\phi+\lambda+T\max_{t\in[0,T]}|f(t,0,0)|+T\max_{t\in[0,T]}|g(t,0,0)|+\max_{k=1,\ldots,K}|I_k(0)|\Bigr)
\sum_{j=0}^K(1+L_3)^j e^{2L T^*_h},\quad t\in(-\infty,T^*_h].
\end{equation}
For $t=T^*_h\leq\alpha^*_h\leq\alpha$  relations \eref{e231} and \eref{e208a} contradict to \eref{e233}, hence such $T^*_h$ cannot exist, i.e.,
relation \eref{e888} holds.

Note that $t-\tau_h(t)\geq -M_1$ for $t\in[0,\alpha]$.
Let $L_4=L_4(M_1)$ be the Lipschitz constant from (A4) (i).
Define the constants
\begin{align}
 M_2&=\max\Bigl\{ \max\{|f(t,u,v)|\sth t\in[0,T], u,v\in \cball{\setR^n}{M_1}\},L_4\Bigr\},
 \label{e856}\\
 M_3&= \max\{|g(t,u,w)|\sth t\in[0,T], u\in \cball{\setR^n}{M_1}, w\in[0,M_1]\}.\label{e857}
\end{align}
Then it follows
\begin{align}
 |x_h(t)-x_h(\bar t)|
 &=\Bigl|\int_{\bar t}^t f([s]_h,x_h([s]_h),x_h([s]_h-[\tau_h([s]_h)]_h))\,ds\Bigr|\nl
 &\leq M_2|t-\bar t|,
 \qquad t,\bar t\in\calI_k\cap[0,\alpha^*_h],\quad k=0,\ldots,K,\nonumber\\
 |x_h(t)-x_h(\bar t)|
 &=|\phi(t)-\phi(\bar t)| \leq L_4|t-\bar t|\leq M_2|t-\bar t|, \qquad t,\bar t\in\calI_{-k} \cap [-M_1,0],\quad k\in\setN_0,\nonumber\\
 |\tau_h(t)-\tau_h(\bar t)|
 &=\Bigl|\int_{\bar t}^t g([s]_h,x_h([s]_h),\tau_h([s]_h))\,ds\Bigr|\leq M_3|t-\bar t|,
 \qquad t,\bar t\in[0,\alpha^*_h].\label{e235}
\end{align}

In view of (A2) (i) and (ii), we obtain that
$$
A_0=\min\{g(t,u,0)\sth t\in[0,T],\ u\in\cball{\setR^n}{M_1}\}>0.
$$
The uniform continuity of $g$ on the compact set $[0,T]\times \cball{\setR^n}{M_1}\times [0,M_1]$ yields
that there exists $\delta_0>0$ such that
\begin{equation}\label{e236}
g(t,u,w)\geq A_0/2>0,\qquad t\in[0,T],\ u\in \cball{\setR^n}{M_1},\ 0\leq w <\delta_0.
\end{equation}
Let $j_0h$ be the last mesh point less than $\alpha^*_h$. Then $\alpha^*_h-j_0h\leq h$, and
\eref{e232} yields $\tau_h(j_0h)>0$.
Define $h^*=\min\{\delta_0/M_3,h_0\}$, where $h_0$ is defined by \eref{e261}.
Using relation \eref{e235} and the definition of $\alpha^*_h$ we get
$$
0<\tau_h(j_0h)=\tau_h(j_0h)-\tau_h(\alpha^*_h)\leq M_3(\alpha^*_h-j_0h)\leq M_3h<\delta_0,\qquad 0<h<h^*.
$$
But then \eref{e236} implies
$$
\dot \tau_h(j_0h)=g(j_0h,x_h(j_0h),\tau_h(j_0h))>0,
$$
which contradicts to the selection of $j_0h$. This contradiction means that $\tau_h(\alpha^*_h)=0$ cannot
happen. Therefore $\alpha^*_h=\alpha$, and  $(x_h,\tau_h)$ exists on
$[0,\alpha]$, moreover \eref{e414a}, \eref{e414}, \eref{e415} and \eref{e415b} hold.
\end{proof}
\bigskip

For the rest of this section we use the following notation. Let $\alpha\not\in\calT$ be defined by Lemma~\ref{l9}, and let
$k_0\in\{0,1,\ldots,K\}$ be the largest index such that $t_{k_0}<\alpha$, $\ell_0\in\setN$ be the  index
such that $-\lambda\in[t_{-\ell_0},t_{-\ell_0+1})$, and redefine $t_{-\ell_0}$, $\calI_{-\ell_0}$,
$t_{k_0+1}$ and  $\calI_{k_0}$ in the following way:
\begin{equation}\label{e313}
t_{-\ell_0}=-\lambda,\quad  \calI_k=[t_k,t_{k+1}),\ \, k=-\ell_0,-\ell_0+1,\ldots,{k_0}-1,
\quad \mbox{and}\quad t_{{k_0}+1}=\alpha,\quad \calI_{k_0}=[t_{k_0},\alpha].
\end{equation}

Next we formulate our existence and uniqueness result.
First we prove the result under a condition which implies that the delayed time function $t-\tau(t)$ is strictly monotone increasing.
(See also Theorem 3.3 in \cite{Ha2024} for a related statement.)
For the proof of the uniqueness we apply the technique used in \cite{C2022}.

\begin{theorem}\label{t1}
Assume (A1)--(A4),  let $\lambda>0$,
and let  $\alpha>0$, $h^*>0$, $M_1,M_2$ and $M_3$ be defined by Lemma~\ref{l9}, moreover
\begin{equation}\label{e851}
 g(t,u,w)<1,\qquad t\in[0,\alpha], \quad u\in\cball{\setR^n}{M_1},\quad w \in[0,M_1].
\end{equation}
 Then the \IIVP{e1}{e2} has a unique solution $(x,\tau)$ on $(-\infty,\alpha]$.
Moreover,
\begin{align}
\tau(t) &> 0,\qquad t\in[0,\alpha],\label{e314a}\\
|x(t)|+\tau(t)&\leq M_1,\qquad t\in(-\infty,\alpha],\label{e314}\\
|x(t)-x(\bar t)|&\leq M_2|t-\bar t|,\qquad  t,\bar t\in\calI_k,\quad k=-\ell_0,\ldots,k_0,\quad
\label{e315}\\
|\tau(t)-\tau(\bar t)|&\leq M_3|t-\bar t|,\qquad  t,\bar t\in[0,\alpha].\label{e315b}
\end{align}
\end{theorem}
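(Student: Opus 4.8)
The plan is to derive existence from the EPCA approximations of Lemma~\ref{l9} by a compactness (Arzel\`a--Ascoli) argument and uniqueness from the impulsive Gronwall inequality of Lemma~\ref{l00}; in both steps the hypothesis \eqref{e851} is decisive, since it forces the delayed time function $\beta(t):=t-\tau(t)$ to be strictly increasing with a uniform rate. Indeed, $g$ is continuous and, by \eqref{e851}, strictly less than $1$ on the compact set $[0,\alpha]\times\cball{\setR^n}{M_1}\times[0,M_1]$, so its maximum there equals $1-\epsilon_0$ for some $\epsilon_0>0$, whence $\dot\beta\geq\epsilon_0>0$ a.e.\ along any solution obeying the bounds of Lemma~\ref{l9}.

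\textbf{Existence.} First I would fix a sequence $h_n\downarrow 0$ with $h_n<h^*$ and consider the EPCA solutions $(x_{h_n},\tau_{h_n})$ on $[0,\alpha]$. By Lemma~\ref{l9} these are uniformly bounded by \eqref{e414} and uniformly Lipschitz on each continuity interval by \eqref{e415}--\eqref{e415b}, hence quasiequicontinuous in the sense of Lemma~\ref{l10}. Passing to a subsequence, $x_{h_n}\to x$ in $PC([-M_1,\alpha],\setR^n)$ and $\tau_{h_n}\to\tau$ uniformly on $[0,\alpha]$, and the limit inherits \eqref{e314}--\eqref{e315b} together with $\tau\geq0$. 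Since $\dot\beta\geq\epsilon_0$, the map $\beta$ is a strictly increasing Lipschitz homeomorphism of $[0,\alpha]$ onto $[-\lambda,\alpha-\tau(\alpha)]$ with Lipschitz inverse, so $\beta^{-1}(\calT^*)$ is a finite set. The remaining task is to pass to the limit in the integral forms \eqref{e281}--\eqref{e282}. The only delicate integrand is the delayed term $x_{h_n}([s]_{h_n}-[\tau_{h_n}([s]_{h_n})]_{h_n})$: its argument converges to $\beta(s)$ because $[s]_{h_n}\to s$, $\tau_{h_n}\to\tau$ and $|[\cdot]_{h_n}-\cdot|\leq h_n$, and for a.e.\ $s$ the point $\beta(s)$ is a continuity point of $x$, so $x_{h_n}(\cdot)\to x(\beta(s))$ there. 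As the integrand is uniformly bounded, the dominated convergence theorem yields the limit passage, while $x_{h_n}(t_k-)\to x(t_k-)$ and the continuity of $I_k$ transfer \eqref{ae1c} to \eqref{e1c}. Thus $(x,\tau)$ solves \IIVP{e1}{e2}, and \eqref{e314a} follows from (A2)(ii) exactly as in Lemma~\ref{l9}.

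\textbf{Uniqueness.} Suppose $(x_1,\tau_1)$ and $(x_2,\tau_2)$ are two solutions; an a priori estimate identical to the one in Lemma~\ref{l9} shows both satisfy \eqref{e314}--\eqref{e315b}, so $x_1,x_2$ are Lipschitz with constant $M_2$ on each $\calI_k$ and both $\beta_i(t)=t-\tau_i(t)$ are strictly increasing with $\dot\beta_i\geq\epsilon_0$. Setting $w(t)=|x_1(t)-x_2(t)|+|\tau_1(t)-\tau_2(t)|$, I would subtract the integral equations, apply the local Lipschitz bounds (A1)(ii), (A2)(iii), (A3), and split
\begin{equation*}
|x_1(\beta_1(s))-x_2(\beta_2(s))|\leq |x_1(\beta_1(s))-x_2(\beta_1(s))|+|x_2(\beta_1(s))-x_2(\beta_2(s))|.
\end{equation*}
The first term is $\leq\sup_{-M_1\leq\zeta\leq s}w(\zeta)$. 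The second is bounded by $M_2|\tau_1(s)-\tau_2(s)|$ whenever $\beta_1(s)$ and $\beta_2(s)$ lie in the same continuity interval of $x_2$; the remaining times, on which $\beta_1(s),\beta_2(s)$ straddle a point of $\calT^*$, form for each such point an interval of length $\leq\epsilon_0^{-1}|\tau_1-\tau_2|$ near the crossing (this is the mechanism borrowed from \cite{C2022}), so they contribute finitely many terms bounded by a constant multiple of $w$ at the crossing times. These enter the impulsive integral inequality as additional \emph{homogeneous} impulse-type terms in Lemma~\ref{l00}; since $x_1,x_2,\tau_1,\tau_2$ coincide on $(-\infty,0]$, all the inhomogeneous constants $a_0,a_1,a_2$ vanish, and \eqref{e303} forces $w\equiv0$, i.e.\ $(x_1,\tau_1)=(x_2,\tau_2)$.

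I expect the main obstacle in both parts to be the same: the delayed argument $x(t-\tau(t))$ evaluates a \emph{discontinuous} function, so neither the pointwise convergence needed for the limit passage (existence) nor the Lipschitz estimate $M_2|\tau_1-\tau_2|$ (uniqueness) is valid at the finitely many times where $\beta$ meets $\calT^*$. The hypothesis \eqref{e851}, through the uniform transversality $\dot\beta\geq\epsilon_0>0$, is precisely what confines those times to a controllably small set and legitimizes both arguments.
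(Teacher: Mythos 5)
Your existence argument is essentially the paper's own proof: the same compactness step (Lemma~\ref{l10} applied to the EPCA family via the bounds \eref{e414}--\eref{e415b} of Lemma~\ref{l9}), the same observation that strict monotonicity of $\beta(t)=t-\tau(t)$ makes the set $\{s\sth \beta(s)\in\calT^*\}$ negligible, the same dominated-convergence passage in the integral equations, and the same interchange-of-limits treatment of the jumps. That half is fine.

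The uniqueness half is where you part ways with the paper, and it contains a genuine gap. The paper localizes at the first disagreement time $\alpha_1=\inf\{t\sth |x(t)-\bar x(t)|+|\tau(t)-\bar\tau(t)|\neq0\}$: since $\tau(\alpha_1)=\bar\tau(\alpha_1)$ and both delayed time functions are continuous and increasing, for $s$ slightly to the right of $\alpha_1$ both delayed arguments lie in one fixed interval $J$ contained in a \emph{single} continuity interval $\calI_j$ of the common past, where the two solutions already coincide; so the delayed difference is bounded by $M_2|\tau(s)-\bar\tau(s)|$, no straddling of discontinuities ever occurs, and plain Gronwall on $[\alpha_1,\alpha_1+\delta]$ gives the contradiction. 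Your global argument must instead confront the straddle sets, and your one-sentence disposal of them --- that they ``enter the impulsive integral inequality as additional homogeneous impulse-type terms in Lemma~\ref{l00}'' --- is not supported by that lemma: Lemma~\ref{l00} admits impulses only at the prescribed times $t_k$, and has no provision for additive terms of the form $C\,\eta(a_j)$ attached to solution-dependent crossing times $a_j$. There is also a circularity trap in the measure estimate: the length of each straddle interval must be bounded by $\epsilon_0^{-1}\eta(a_j)$ with $\eta$ evaluated at the \emph{left endpoint} $a_j$ of that interval; if one bounds it by $\epsilon_0^{-1}\sup\eta$, the resulting inequality is $\sup w\leq C\sup w$ with $C$ of order $M_1^*L_1/\epsilon_0$, which need not be less than $1$ and proves nothing. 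Making your route rigorous requires exactly the successive-interval iteration that forms the bulk of the paper's proof of Theorem~\ref{t2} (refine the partition by the points $a_j$ and $t_k$, estimate $\eta(a_j)$ backwards through the integral equation, apply Gronwall piece by piece so that the previously established value $w(a_j)=0$ kills each new term); with zero forcing this does yield $w\equiv0$, but it is a substantial argument, not an application of Lemma~\ref{l00} as stated.

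A second, smaller gap: your opening claim that any second solution satisfies \eref{e314}--\eref{e315b} ``by an a priori estimate identical to the one in Lemma~\ref{l9}'' is asserted, not proved --- and your argument needs it, since without the $M_1$-bounds the hypothesis \eref{e851} gives no control of $g$ along the second solution, hence no $\dot\beta_2\geq\epsilon_0$ and no straddle-measure estimate. Note that the paper explicitly cautions that a second solution need not satisfy \eref{e314}--\eref{e315b} and works instead with a separate bound $M_1^*$. Your claim can in fact be justified (first establish $\bar\tau>0$ from (A2)(ii) so the delayed argument stays in the past, then re-run the continuation/Gronwall argument of Lemma~\ref{l9} with the exact rather than the EPCA integral equations), but that step has to be written out; as it stands it is the load-bearing hypothesis of your whole uniqueness scheme.
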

\begin{proof}
 Let  $(x_h,\tau_h)$ be defined
 by the \IIVP{ae1}{ae2} for $0<h<h^*$. Note that $(x_h(t),\tau_h(t))=(\phi(t),\lambda)$ is independent of $h$ on $(-\infty,0]$.
 It follows from Lemmas~\ref{l10}, \ref{l9} and Arzel\`a--Ascoli Theorem
 that there exists a sequence $h_i$ and functions $x\in\PC$ and $\tau\in C([0,\alpha],\setR)$
 such that $h_i\to0$ as $i\to\infty$ and
 \begin{equation}\label{e854}
 \lim_{i\to\infty}|x_{h_i}-x|_\PC=0\qquad\mbox{and}\qquad  \lim_{i\to\infty}|\tau_{h_i}-\tau|_{C([0,\alpha],\setR)}=0.
 \end{equation}
 Note that in the above relations the restrictions of $x_{h_i}$ and $\tau_{h_i}$ to $[-M_1,\alpha]$ and  $[0,\alpha]$
 are denoted simply by $x_{h_i}$ and $\tau_{h_i}$, respectively.
 Consider equation  \eref{e282} for $h=h_i$, and taking the limit $i\to\infty$, it is easy to check that
 $x$ and $\tau$ satisfy
$$
 \tau(t) = \tau(t_k)+\int_{t_k}^t g(s,x(s),\tau(s))\,ds,\qquad t\in\calI_k,\quad k=0,1,\ldots,k_0.
$$
Then $x$ and $\tau$ fulfill \eref{e1b} on $[0,\alpha]$. Relations \eref{e414} and \eref{e854} entail that
$|x(t)|+|\tau(t)|\leq M_1$ for $t\in[0,\alpha]$, hence
 $x(t)\in\cball{\setR^n}{M_1}$ and $\tau(t)\in[0,M_1]$ for $t\in[0,\alpha]$.
Since $g$ is continuous on the compact set
$[0,\alpha]\times\cball{\setR^n}{M_1}\times[0,M_1]$, relation \eref{e851} yields
$$
\essinf_{t\in[0,\alpha]} \frac{d}{dt} (t-\tau(t))=1-\esssup_{t\in[0,\alpha]}g(t,x(t),\tau(t))>0,
$$
i.e., the delayed time function is strictly monotone increasing on $[0,\alpha]$.
Therefore the set
\begin{equation}\label{e855}
\calU:=\bigcup_{k=-\ell_0}^{k_0}\{s\in[0,\alpha]\sth s-\tau(s)=t_k\}
\end{equation}
has Lebesgue measure 0. From the continuity of $\tau$, \eref{e216} and \eref{e854} we deduce
$$
[s]_{h_i}-[\tau_{h_i}([s]_{h_i})]_{h_i}\to s-\tau(s),\qquad \mbox{for}\ s\in[0,\alpha],
$$
and since $x$ has jump discontinuity only at $t\in\calT^*$, we get
\begin{equation}\label{e853}
 x_{h_i}([s]_{h_i})\to x(s)\quad\mbox{and}\quad
 x_{h_i}([s]_{h_i}-[\tau_{h_i}([s]_{h_i})]_{h_i})\to x(s-\tau(s))\qquad \mbox{for a.e.\ } s\in[0,\alpha]
\end{equation}
as $i\to\infty$.
From \eref{e281}, using \eref{e216}, \eref{e854}, \eref{e853} and  Lebesgue's dominated convergence theorem, we obtain
$$
x(t) = x(t_k)+\int_{t_k}^t f(s,x(s),x(s-\tau(s)))\,ds,\qquad t\in\calI_k,\quad k=0,1,\ldots,k_0.
$$

Let $k\in\{1,\ldots,k_0\}$.
Using the uniform convergence of $x_{h_i}$ to $x$ on $\calI_{k-1}$, we have (see, e.g., Theorem 7.11 in \cite{Ru1976})
$$
\lim_{i\to\infty} x_{h_i}(t_k-)=\lim_{i\to\infty}\lim_{t\to t_k-} x_{h_i}(t)=\lim_{t\to t_k-}\lim_{i\to\infty} x_{h_i}(t)=x(t_k-).
$$
Therefore the jump condition \eref{ae1c} and the continuity of $I_k$ imply
\begin{align*}
 \Delta x(t_k)
 &= x(t_k)-x(t_k-)\\
 &= \lim_{i\to\infty}\Bigl(x_{h_i}(t_k)-x_{h_i}(t_k-)\Bigr)\\
 &= \lim_{i\to\infty}I_k(x_{h_i}(t_k-))\\
 &= I_k(x(t_k-)),
\end{align*}
hence $x$ satisfies \eref{e1c}.

Clearly, $x$ satisfies \eref{e2a} on $[-M_1,0]$, and $\tau(0)=\lambda$.
Using \eref{e2a} and \eref{e2} we can extend the definition of $x$ and $\tau$ to
the infinite interval $(-\infty,0]$.
Then $(x,\tau)$ is a solution of the \IIVP{e1}{e2} on $(-\infty,\alpha]$.
To show \eref{e314a} suppose there exists  $t^*\in(0,\alpha]$ such that
$$
\tau(t)>0,\qquad t\in[0,t^*)\qquad\mbox{and}\qquad \tau(t^*)=0.
$$
Then $\dot\tau(t^*-)\leq 0$, but it contradicts to (A2) (ii). Therefore \eref{e314a} holds.
Since $x_{h_i}$ and $\tau_{h_i}$ satisfy \eref{e414}--\eref{e415b}, taking the limit $i\to\infty$ yields
 \eref{e314}--\eref{e315b}.

To prove the uniqueness, let $(x,\tau)$ be the solution obtained by the previous argument, and
assume that $(\bar x,\bar \tau)$ is an other solution of the \IIVP{e1}{e2} on $(-\infty,\alpha]$
(which also satisfies \eref{e314a} because of (A2) (ii), but does not necessarily satisfy \eref{e314}--\eref{e315b}).
Let $M_1^*\geq M_1$ be such that
\begin{equation}\label{e260}
|\bar x(t)|+\bar\tau(t)\leq M_1^*,\quad t\in[0,\alpha].
\end{equation}
It follows from the monotonicity of the delayed time function that $s-\bar\tau(s)\geq-\lambda$ for $s\in[0,\alpha]$.
Let $L_1=L_1(M_1^*)$, $L_2=L_2(M_1^*)$  and $L_4=L_4(\lambda)$
be the Lipschitz constants from (A1) (ii), (A2) (iii)  and (A4) (i).
Let us introduce
$$
\alpha_1:=\inf\{t\in[0,\alpha]\sth |x(t)-\bar x(t)|+|\tau(t)-\bar\tau(t)|\neq0\}.
$$
Suppose $\alpha_1<\alpha$.
Let $k\in\{0,\ldots,k_0\}$ be such that $\alpha_1\in\calI_k$, and
let $\delta_0>0$ be such that $[\alpha_1,\alpha_1+\delta_0]\subset\calI_k$.
Then it is easy to obtain for $t\in[\alpha_1,\alpha_1+\delta_0]$
\begin{align*}
 |x(t)-\bar x(t)|
 &\leq |x(t_k)-\bar x(t_k)|
 +\int_{t_k}^t \Bigl|f(s,x(s),x(s-\tau(s)))-f(s,\bar x(s),\bar x(s-\bar\tau(s))) \Bigr|ds\\
 &=\int_{\alpha_1}^t \Bigl|f(s,x(s),x(s-\tau(s)))-f(s,\bar x(s),\bar x(s-\bar\tau(s))) \Bigr|ds\\
 &\leq  \int_{\alpha_1}^t L_1\Bigl(|x(s)-\bar x(s)|+|x(s-\tau(s))-x(s-\bar\tau(s))|
   +|x(s-\bar \tau(s))-\bar x(s-\bar\tau(s))| \Bigr)ds.\\
\end{align*}
The positivity of $\tau$ and $\bar\tau$ on $[0,\alpha]$ guarantees that there exists $\delta_1>0$ such that
$\tau(t)\geq\delta_1$ and $\bar\tau(t)\geq\delta_1$ for $t\in[0,\alpha]$.
Suppose $\alpha_1-\tau(\alpha_1)=\alpha_1-\bar\tau(\alpha_1)\in\calI_j$ for some $j\in\{-\ell_0,\ldots,k\}$. Then there exists $\epsilon>0$ such that
$$
J:=[\alpha_1-\tau(\alpha_1),\alpha_1-\tau(\alpha_1)+\epsilon]\subset\calI_j.
$$
We have that both delayed time functions $t-\tau(t)$ and $t-\bar\tau(t)$ are increasing  and continuous, hence
there exist $0<\delta<\min\{\delta_0,\delta_1\}$ such that
$$
s-\tau(s)\in J\qquad\mbox{and}\qquad s-\bar\tau(s)\in J\qquad\mbox{for}\ s\in[\alpha_1,\alpha_1+\delta].
$$
Therefore,
 for $t\in[\alpha_1,\alpha_1+\delta]$ we obtain $|x(s-\bar \tau(s))-\bar x(s-\bar\tau(s))|=0$, and
\begin{align*}
 |x(t)-\bar x(t)|
 &\leq  \int_{\alpha_1}^t L_1\Bigl(|x(s)-\bar x(s)|+M_2|\tau(s)-\bar\tau(s)|\Bigr)ds.
\end{align*}
Similarly, for $t\in[\alpha_1,\alpha_1+\delta]$
\begin{align*}
 |\tau(t)-\bar \tau(t)|
 &\leq |\tau(t_k)-\bar \tau(t_k)|
 +\int_{t_k}^t \Bigl|g(s,x(s),\tau(s))-g(s,\bar x(s),\bar\tau(s)) \Bigr|ds\\
 &=\int_{\alpha_1}^t \Bigl|g(s,x(s),\tau(s))-g(s,\bar x(s),\bar\tau(s)) \Bigr|ds\\
 &\leq \int_{\alpha_1}^t L_2\Bigl(|x(s)-\bar x(s)|+|\tau(s)-\bar\tau(s)|  \Bigr)ds.
\end{align*}
We define
\begin{align*}
 L_0&=L_1\max\{1,M_2\}+L_2,\\
 \omega(t) &=  |x(t)-\bar x(t)|+ |\tau(t)-\bar \tau(t)|,\qquad t\in(-\infty,\alpha].
\end{align*}
Then adding the previous estimates we get for $t\in[\alpha_1,\alpha_1+\delta]$
$$
 \omega(t)
 \leq \int_{\alpha_1}^t L_0\sup_{-M_1\leq\zeta\leq s}\omega(\zeta)\,ds.
$$
Using $\omega(\alpha_1)=0$, Gronwall's lemma implies that $\omega(t)=0$ for $t\in[\alpha_1,\alpha_1+\delta]$,
 which contradicts to the definition of $\alpha_1$. So we obtain
 $\omega(t)=0$ for $t\in[0,\alpha]$, and the uniqueness of
 the solution follows.
\end{proof}
\bigskip

The proofs of Lemma~\ref{l9} and Theorem~\ref{t1} imply immediately the following result.

\begin{corollary}\label{cor2}
Let $\lambda>0$, and assume (A1)--(A4) where $f$ and $g$ are globally Lipschitz
continuous on their domain, i.e., $L_1$ and $L_2$ do not depend on $M$ in (A1) (ii) and (A2) (iii),
respectively, moreover
$$
g(t,u,w)<1,\qquad t\in[0,T],\quad u\in\setR^n,\quad w\in[0,\infty).
$$
Then the \IIVP{e1}{e2} has a unique solution $(x,\tau)$ on $(-\infty,T]$, and relations \eref{e314a}--\eref{e315b} hold
with $\alpha=T$.
\end{corollary}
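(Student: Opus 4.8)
The plan is to revisit the proofs of Lemma~\ref{l9} and Theorem~\ref{t1} and observe that the only reason the stopping time $\alpha$ had to be chosen possibly smaller than $T$ was the coupling between the a~priori radius $M_1$ and the Lipschitz constant $L=L_1+L_2$: in the local Lipschitz setting $L=L(M_1)$ grows with $M_1$, so enlarging $M_1$ also enlarges the factor $e^{2L\alpha}$ appearing in \eref{e231}, and one cannot guarantee \eref{e231} at $\alpha=T$. Under the global Lipschitz hypothesis this obstruction disappears, and the entire construction goes through with $\alpha=T$.

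First I would record that, by assumption, $L_1$ and $L_2$ no longer depend on $M$, so $L=L_1+L_2$ is a fixed finite constant. Hence the quantity
$$
\Bigl(N_\phi+\lambda+T\max_{s\in[0,T]}|f(s,0,0)|+T\max_{s\in[0,T]}|g(s,0,0)|+\max_{k=1,\ldots,K}|I_k(0)|\Bigr)\sum_{j=0}^K(1+L_3)^j e^{2LT}
$$
is a fixed finite number, independent of $M_1$. I would therefore choose $M_1$ strictly larger than this number. With this choice the strict inequality \eref{e231} holds already with $\alpha=T$, and the remaining constants $M_2$, $M_3$, $A_0$, $\delta_0$, $h^*$ of the proof of Lemma~\ref{l9}, all of which depend only on $M_1$ and not on $\alpha$, are fixed exactly as before.

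Next I would rerun the proof of Lemma~\ref{l9} verbatim with $\alpha=T$ (note $T\not\in\calT$ since $t_K<T$). The Gronwall estimate \eref{e208a}, together with the enlarged choice of $M_1$, yields $\omega_h(t)<M_1$ on all of $[0,T]$, and the positivity argument based on \eref{e236} shows that $\tau_h$ cannot vanish on $[0,T]$; hence $\alpha^*_h=T$ and the \IIVP{ae1}{ae2} has a unique solution on $[0,T]$ satisfying \eref{e414a}--\eref{e415b} with $\alpha=T$. Finally I would invoke the proof of Theorem~\ref{t1} with $\alpha=T$: the global hypothesis $g(t,u,w)<1$ for $(t,u,w)\in[0,T]\times\setR^n\times[0,\infty)$ implies condition \eref{e851}, so the Arzel\`a--Ascoli compactness argument produces a convergent subsequence whose limit is a solution on $(-\infty,T]$ with properties \eref{e314a}--\eref{e315b}, and the Gronwall uniqueness argument gives uniqueness.

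The single point that genuinely requires checking---and the only place where global Lipschitz continuity is used---is the possibility of choosing $M_1$ so that \eref{e231} holds at $\alpha=T$. In the local setting $e^{2L(M_1)T}$ may grow faster in $M_1$ than the linear term on the left, blocking such a choice; globally the exponent is frozen and the choice is immediate. Everything else is a direct transcription of the two earlier proofs, which is why the result follows at once.
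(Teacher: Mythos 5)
Your proposal is correct and is exactly the argument the paper intends: the paper proves this corollary by simply remarking that the proofs of Lemma~\ref{l9} and Theorem~\ref{t1} apply verbatim, and your write-up supplies the one genuinely relevant observation, namely that with $L=L_1+L_2$ fixed independently of $M$, one may choose $M_1$ larger than the fixed quantity involving $e^{2LT}$ so that \eref{e231} holds with $\alpha=T$, after which both earlier proofs run unchanged.
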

\smallskip

\section{Numerical approximation}\label{s4}
\setcounter{equation}0
\setcounter{theorem}0

Now we  study numerical approximation of solutions of  the \IIVP{e1}{e2}
with the help of EPCAs. The next theorem shows that the solutions of
\eref{ae1}--\eref{ae2} approximate that of \eref{e1}--\eref{e2} uniformly
on the compact interval $[0,\alpha]$. The key assumption for this result is the
piecewise strict monotonicity of the delayed time function $t-\tau(t)$.
\medskip

\begin{theorem}\label{t2}
Suppose (A1)-(A4) hold,  let $\lambda>0$,  and let $0<\alpha\leq T$ and $h^*>0$ be defined by Lemma~\ref{l9}.
Let $(x,\tau)$ be any solution of the \IIVP{e1}{e2} on $(-\infty,\alpha]$,
where the corresponding delayed time function
$t-\tau(t)$ is piecewise strictly monotone  on $[0,\alpha]$, and suppose the impulsive time moments $t_k$ ($k=-\ell_0,\ldots,k_0$) are not local extreme values or extreme points of the delayed time function, and $\dot\tau(0)\neq1$ and $\dot\tau(\alpha-)\neq1$. Let $(x_h,\tau_h)$ be the solution of the \IIVP{ae1}{ae2} on $(-\infty,\alpha]$ for $0<h<h^*$.
Then
 \begin{equation}\label{e221}
  \lim_{h\to0+}\max_{t\in[0,\alpha]}|x_h(t)-x(t)|=0\qquad\mbox{and}\qquad \lim_{h\to0+}\max_{t\in[0,\alpha]}|\tau_h(t)-\tau(t)|=0.
 \end{equation}
\end{theorem}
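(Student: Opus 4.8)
The plan is to prove convergence by showing that every sequence $h_i \to 0+$ has a subsequence along which $(x_{h_i}, \tau_{h_i})$ converges uniformly to $(x,\tau)$; since the limit is forced to be the given solution, the full sequence must converge. First I would invoke Lemma~\ref{l9} to obtain the uniform bounds \eref{e414}--\eref{e415b} on the family $\{(x_h,\tau_h)\}_{0<h<h^*}$: these give uniform boundedness and uniform Lipschitz (hence quasiequicontinuity) estimates, so by Lemma~\ref{l10} (the Arzel\`a--Ascoli variant on $\PCr$) the family is relatively compact. Thus any sequence $h_i\to0+$ has a subsequence, relabeled $h_i$, with $x_{h_i}\to \hat x$ in $\PC$ and $\tau_{h_i}\to\hat\tau$ in $C([0,\alpha],\setR)$ for some limit pair $(\hat x,\hat\tau)$.

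The second step is to identify the limit as a solution of the \IIVP{e1}{e2}. This is essentially the passage-to-the-limit argument already carried out in the existence proof of Theorem~\ref{t1}: starting from the integrated EPCA relations \eref{e281}--\eref{e282}, I would pass $i\to\infty$ using \eref{e216}, the uniform convergence, and Lebesgue's dominated convergence theorem to conclude that $(\hat x,\hat\tau)$ satisfies \eref{e1}--\eref{e2} together with the jump conditions \eref{e1c}. The crucial input here is that the integrand arguments $[s]_{h_i}-[\tau_{h_i}([s]_{h_i})]_{h_i}$ converge to $s-\hat\tau(s)$ and that $x_{h_i}$ evaluated at these delayed points converges to $\hat x(s-\hat\tau(s))$ for a.e.\ $s$, which requires that the delayed time does not land on the discontinuity set $\calT^*$ for a set of $s$ of positive measure.

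The main obstacle, and the reason the theorem's hypotheses differ from those of Theorem~\ref{t1}, is precisely establishing this a.e.\ convergence of the delayed argument under only \emph{piecewise} strict monotonicity. Under the strict-monotonicity hypothesis \eref{e851} the set $\calU$ in \eref{e855} where $s-\tau(s)=t_k$ is automatically finite (hence measure zero); but when $t-\tau(t)$ is merely piecewise strictly monotone, a level set $\{s : s-\tau(s)=t_k\}$ could in principle contain a subinterval, on which $x_{h_i}$ evaluated at the moving delayed point straddles a jump of $\hat x$ and fails to converge. This is exactly what the extra hypotheses are designed to rule out: the assumptions that the impulsive moments $t_k$ are not local extreme \emph{values} of the delayed time function, together with $\dot\tau(0)\neq1$ and $\dot\tau(\alpha-)\neq1$, guarantee that on each monotonicity subinterval the delayed time function crosses each level $t_k$ transversally, so each level set $\{s:s-\tau(s)=t_k\}$ is finite. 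Hence $\calU$ remains a null set, \eref{e853} holds a.e., and the dominated-convergence passage goes through as before.

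Finally, because $(\hat x,\hat\tau)$ is a solution of the \IIVP{e1}{e2} on $(-\infty,\alpha]$ whose delayed time function inherits piecewise strict monotonicity, and because the given $(x,\tau)$ is by hypothesis such a solution, I would appeal to uniqueness (Theorem~\ref{t3}, the piecewise-monotone existence-uniqueness result promised in the introduction) to conclude $(\hat x,\hat\tau)=(x,\tau)$. Since every subsequence of any sequence $h_i\to0+$ admits a further subsequence converging to the \emph{same} limit $(x,\tau)$, the standard subsequence principle yields the full convergence \eref{e221}. The bulk of the work thus lies in the measure-zero argument for $\calU$; the compactness and limit-identification steps are routine adaptations of the Theorem~\ref{t1} proof.
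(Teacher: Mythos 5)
Your proposal has a genuine gap: it is circular within the logical structure of the paper. The uniqueness statement you invoke in the final step --- Theorem~\ref{t3}, uniqueness of solutions whose delayed time function is piecewise strictly monotone --- is not available before Theorem~\ref{t2} is proved: in the paper, the uniqueness part of Theorem~\ref{t3} is itself deduced \emph{from} Theorem~\ref{t2} (its proof literally states that Theorem~\ref{t2} yields $x_{h_i}\to\bar x$ and $\tau_{h_i}\to\bar\tau$, whence the two solutions coincide). The only uniqueness result proved independently of Theorem~\ref{t2} is that of Theorem~\ref{t1}, and it requires the strict monotonicity hypothesis \eref{e851}; the paper contains no independent uniqueness proof under mere piecewise strict monotonicity, and producing one is not routine (the argument in Theorem~\ref{t1} exploits monotone increase of both delayed time functions to keep the delayed arguments in a common interval to the left of $\alpha_1$). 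Without an independent uniqueness theorem, your subsequence limit $(\hat x,\hat\tau)$ cannot be identified with the prescribed solution $(x,\tau)$, and the subsequence principle collapses.

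There is a second, related defect: even granting a within-class uniqueness theorem, your limit pair $(\hat x,\hat\tau)$ is only known to solve the IIVP; nothing in the compactness argument guarantees that $t-\hat\tau(t)$ is piecewise strictly monotone (uniform convergence gives no control on $\dot{\hat\tau}$ beyond boundedness), so neither the measure-zero argument for the level sets of $t-\hat\tau(t)$ --- which you need for the a.e.\ convergence \eref{e853} in your limit-identification step --- nor a hypothetical uniqueness-within-the-class result applies to it. (A smaller misreading: under piecewise strict monotonicity the level sets $\{s\sth s-\tau(s)=t_k\}$ are automatically finite, so $\calU$ is null without the extra hypotheses; the assumptions that the $t_k$ are not extreme values and that $\dot\tau(0)\neq1$, $\dot\tau(\alpha-)\neq1$ serve a different, quantitative purpose.) The paper's proof avoids all of this by being entirely quantitative: it estimates $\omega_h(t)=|x_h(t)-x(t)|+|\tau_h(t)-\tau(t)|$ directly against the given solution, splitting each interval $\calI_k$ into sets $A_{k,i}^h$, $B_{k,i}^h$, $C_{k,i}^h$ according to whether the discretized delayed argument $[s]_h-[\tau_h([s]_h)]_h$ lies in the same continuity interval as $s-\tau(s)$, bounding the Lebesgue measure of the bad sets by $\eta_h/\epsilon_0$ plus $O(h)$ terms using the transversality constants $\epsilon_0$ and $N_0$ (this is exactly where the extra hypotheses enter), and then closing the estimate with the impulsive Gronwall inequality (Lemma~\ref{l00}) to obtain $\omega_h\leq D_hE^*$ with $D_h\to0$. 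That direct route needs no uniqueness at all --- indeed uniqueness (Theorem~\ref{t3}) is harvested afterwards as a corollary --- and it yields in addition the linear convergence rate of Corollary~\ref{cor1}, which a soft compactness argument can never produce.
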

\begin{proof}
Let $M_1, M_2$ and $M_3$ be defined by Lemma~\ref{l9}.
We define
$$
M_1^*:=\max\Bigl\{\sup\{|x(t)|+|\tau(t)|\sth t\in[0,\alpha]\},M_1\Bigr\}.
$$
Then $t-\tau(t)\geq -M_1^*$ for $t\in[0,\alpha]$.
Let $M_2^*$ and $M_3^*$ be defined by \eref{e856} and \eref{e857}, respectively, where $M_1$ is replaced with $M_1^*$,
and let $L_1=L_1(M_1^*)$, $L_2=L_2(M_1^*)$,  $L_3$
and $L_4=L_4(M_1^*)$ be the Lipschitz constants from (A1)--(A4), respectively.
Note that $M_2^*\geq M_2$ and $M_3^*\geq M_3$.
We use the notations defined by \eref{e313} with the modification that let $\ell_0$ be the index for which
$-M_1^*\in[t_{-\ell_0},t_{-\ell_0+1})$, and redefine $t_{-\ell_0}=-M_1^*$.
For $h\in(0,h^*)$ we define the constants
\begin{align*}
 \mu_{1,h} &= \max_{k=0,\ldots,k_0}\int_{t_k}^{t_{k+1}} \Bigl|f([s]_h,x(s),x(s-\tau(s)))-f(s,x(s),x(s-\tau(s)))\Bigr|\,ds,\\
 \mu_{2,h} &= \max_{k=0,\ldots,k_0}\int_{t_k}^{t_{k+1}} \Bigl|g([s]_h,x(s),\tau(s))-g(s,x(s),\tau(s))\Bigr|\,ds.
\end{align*}
Relation \eref{e216}, the continuity of $f$, $g$, $x$ and $\tau$ on the intervals $\calI_k$ and  Lebesgue's dominated convergence theorem imply that
\begin{equation}\label{e212}
\lim_{h\to0+}(\mu_{1,h}+\mu_{2,h})=0.
\end{equation}
Since $x_h$ and $\tau_h$ are continuous on $\calI_k$, \eref{ae1} and \eref{ae1b} yield for $t\in\calI_k$
and $k=0,1,\ldots,k_0$
\begin{align}
 x_h(t) &= x_h(t_k)+\int_{t_k}^t f([s]_h,x_h([s]_h),x_h([s]_h-[\tau_h([s]_h)]_h))\,ds,\label{e204}\\
 \tau_h(t) &= \tau_h(t_k)+\int_{t_k}^t g([s]_h,x_h([s]_h),\tau_h([s]_h))\,ds.\label{e205}
\end{align}
Integrating \eref{e1} and \eref{e1b} from $t_k$ to $t\in\calI_k$ we get
\begin{align}
 x(t) &= x(t_k)+\int_{t_k}^t f(s,x(s),x(s-\tau(s)))\,ds,\label{e204b}\\
 \tau(t) &= \tau(t_k)+\int_{t_k}^t g(s,x(s),\tau(s))\,ds.\label{e205b}
\end{align}
Taking the difference of \eref{e204} and \eref{e204b}, and using assumption (A1)(ii) and the definition of $\mu_{1,h}$ we get
\begin{align}
  |x_h(t)-x(t)| &\leq |x_h(t_k)-x(t_k)|
  +\int_{t_k}^t \Bigl|f([s]_h,x_h([s]_h),x_h([s]_h-[\tau_h([s]_h)]_h))-f(s,x(s),x(s-\tau(s)))\Bigr|\,ds\nl
  &\leq |x_h(t_k)-x(t_k)|+\int_{t_k}^{t} \Bigl|f([s]_h,x(s),x(s-\tau(s)))-f(s,x(s),x(s-\tau(s)))\Bigr|\,ds\nl
  &\quad +\int_{t_k}^{t} \Bigl|f([s]_h,x_h([s]_h),x_h([s]_h-[\tau_h([s]_h)]_h))-f([s]_h,x(s),x(s-\tau(s)))\Bigr|\,ds\nl
  &\leq |x_h(t_k)-x(t_k)|+\mu_{1,h}\nl
  &\quad +\int_{t_k}^{t} L_1\Bigl(|x_h([s]_h)-x(s)|+|x_h([s]_h-[\tau_h([s]_h)]_h))-x(s-\tau(s))|\Bigr)\,ds\label{e209}
\end{align}
for $t\in\calI_k$, $k=0,\ldots,k_0$.
We introduce further notations
\begin{align*}
 z_h(t) &=|x_h(t)-x(t)|,\qquad t\in(-\infty,\alpha],\\
 \eta_h(t) &= |\tau_h(t)-\tau(t)|,\qquad t\in(-\infty,\alpha],\\
 \omega_h(t) &= z_h(t)+\eta_h(t),\qquad t\in(-\infty,\alpha].
\end{align*}
Then \eref{e209} combined with relations \eref{e216}, \eref{e415}  and \eref{e415b} implies
\begin{align}
  z_h(t) &\leq z_h(t_k)+\mu_{1,h}
   +\int_{t_k}^{t} L_1\Bigl(|x([s]_h)-x(s)|+|x([s]_h-[\tau_h([s]_h)]_h)-x(s-\tau(s))|\Bigr)\,ds\nl
  &\quad +\int_{t_k}^{t} L_1\Bigl(z_h([s]_h)+z_h([s]_h-[\tau_h([s]_h)]_h)\Bigr)\,ds\label{e210a}
\end{align}
for $t\in\calI_k$, $k=0,\ldots,k_0$. Suppose $t<t_k+h$. Then
$$
\int_{t_k}^{t} \Bigl(|x([s]_h)-x(s)|+|x([s]_h-[\tau_h([s]_h)]_h)-x(s-\tau(s))|\Bigr)\,ds\leq 4M_1^*h.
$$
For $t\in(t_k+h,t_{k+1})$ we have $[t]_h\geq t_k$. Hence, applying the definition of $M_1^*$, $M_2^*$
and \eref{e216}, we get
\begin{equation}\label{e860}
 \int_{t_k}^{t} |x([s]_h)-x(s)|\,ds
 =\int_{t_k}^{t_k+h} |x([s]_h)-x(s)|\,ds+\int_{t_k+h}^{t} |x([s]_h)-x(s)|\,ds
 \leq 2M_1^*h+M_2^*h\alpha.
\end{equation}
Note that \eref{e860} holds for all $t\in\calI_k$.

Fix $t\in(t_k+h,t_{k+1})$. Next we estimate the integral with the delayed terms
$$
\int_{t_k}^{t} |x([s]_h-[\tau_h([s]_h)]_h)-x(s-\tau(s))|\,ds.
$$
Let $\calU$ be defined by \eref{e855}, and let $s_0=0<s_1<\cdots<s_m=\alpha$
be the points of local extrema of the delayed time function $t-\tau(t)$, and define $\calM=\{s_1,\ldots,s_{m-1}\}$.
Because of the assumption of the theorem,
$\calU$ and $\calM$ are disjoint sets. Let $d_0=\min\{s_{j+1}-s_j\sth j=0,\ldots,m-1\}$,
and $0<d_1<d_0$  be the smallest distance between consecutive points of the set
$\calU\cup\calM\cup\{0,\alpha\}$.
Let us define $\calM^+$ and $\calM^-$ as the set of points $p_j$ of $\calM\cup\{0\}$ for which
the delayed time function is increasing and decreasing, respectively, on the intervals $(p_j,p_{j+1})$.

Let $0<\delta^*<d_1/4$. Then the intervals $(s_j-\delta^*,s_j+\delta^*)$
do not contain any point of $\calU\cap(0,\alpha)$
for all $s_j\in\calM\cup\{0,\alpha\}$. Define the constants
\begin{align*}
\epsilon_0&=\essinf\biggl\{|1-\dot\tau(s)|\sth s\in(s_0,s_1-\delta^*)\cup
\bigcup_{j=1}^{m-2}(s_j+\delta^*,s_{j+1}-\delta^*) \cup(s_{m-1}+\delta^*,s_m)\biggr\},\\
N_0&=\esssup\Bigl\{|1-\dot\tau(s)|\sth s\in(0,\alpha)\Bigr\}.
\end{align*}
The definition of piecewise strict monotonicity,  the assumption $\dot\tau(0)\neq1$ and $\dot\tau(\alpha-)\neq1$,
and \eref{e1b} imply that $\epsilon_0$ and $N_0$ are finite positive numbers.
The assumed piecewise monotonicity of
the delayed time function yields that  $\calU$ has finitely many elements. Let
$u_{k,1}<u_{k,2}<\cdots<u_{k,i_k}$ be the elements of
$\calU\cap(t_k,t_{k+1})$, moreover define $u_{k,0}=t_k$ and $u_{k,i_k+1}=t_{k+1}$. Then for every $k\in\{0,\ldots,k_0\}$ and $i\in\{0,\ldots,i_k\}$ there exists
$j_{k,i}\in\{-\ell_0,\ldots,k_0\}$ such that $s-\tau(s)\in\calI_{j_{k,i}}$ for $s\in(u_{k,i},u_{k,i+1})$.

It follows from \eref{e216} and the definition of $M_3^*$ that
\begin{align}
|([s]_h-[\tau_h([s]_h)]_h)-(s-\tau(s))|
&\leq |[s]_h-s|+|[\tau_h([s]_h)]_h-\tau_h([s]_h)|+|\tau_h([s]_h)-\tau_h(s)|+|\tau_h(s)-\tau(s)|\nl
&\leq Mh+\eta_h(s),\qquad s\in[0,\alpha],\label{e863}
\end{align}
where  $M=2+M_3^*$.

Let $e_0^*$ be the smallest distance between consecutive elements of the set
$\calT^*\cup\{t_k-\tau(t_k)\sth k=0,\ldots,k_0\}$,
$e_1^*$ be the smallest distance between consecutive elements of the set
$\calT^*\cup\{s_j-\tau(s_j)\sth j=1,\ldots,m-1\}$,
and
we define $\delta$ so that $0<\delta<\min\{\delta^*,e_0^*/(4\epsilon_0),e_0^*/(N_0+\epsilon_0),e_1^*/\epsilon_0\}$.
Then the intervals $(u_{k,i},u_{k,i}+\delta)$ and $(u_{k,i+1}-\delta,u_{k,i+1})$ do not contain
any element of $\calU\cup\calM$,
and the delayed time function is either monotone increasing or decreasing on these intervals.

We define the constants
\begin{align*}
n_0&=\max\{ i_k\sth k=0,\ldots,k_0\}+1,\\
K_0&=\frac{1+L_2(1+M_2^*+M_3^*)\delta}{\epsilon_0},\\
K_1&=2M_1^*n_0(2K_0+1)+\alpha M_2^*M,\\
K_2&=L_1(2M_1^*+M_2^*\alpha+K_1),\\
K_3&=L_2(2M_1^*+M_2^*\alpha+M_3^*\alpha),\\
K_4&=\max\Bigl\{\frac{2M_1^*}{\epsilon_0},1\Bigr\},\\
D_h&=\mu_{1,h}+\mu_{2,h}+(K_2+K_3)h,\\
L_0&= \max\Bigl\{L_1\max\Bigl\{1,\frac{L_2}{\epsilon_0}\Bigr\}+L_2,L_1M_2^*,L_1\Bigr\},\\
m^*&= 2(i_0+\cdots+i_{k_0}),\\
E^*&=2^{m^*+k_0}K_4^{m^*+k_0}\Bigl(\sum_{i=0}^{k_0}(1+L_3)^i\Bigr)^{m^*+2k_0}(e^{3L_0 \alpha})^{m^*+2k_0}.
\end{align*}
It follows from relation \eref{e212} that there exists $h_1\in(0,h^*)$ such that
\begin{equation}\label{e873}
 D_hE^* <\frac{\epsilon_0\delta}{2},\qquad 0<h<h_1.
\end{equation}

Let $h_2=\min\{h_1,\frac{\epsilon_0\delta}{2M},\delta\}$ and $h\in(0,h_2)$. Since $\eta_h(0)=0$, for small
$t$ we have $\eta_h(t)<\frac{\epsilon_0 \delta}{2}$.
Suppose  $0<\beta \leq \alpha$ is such that
\begin{equation}\label{e865}
\eta_h(s)<\frac{\epsilon_0 \delta}{2},\qquad s\in[0,\beta),\qquad \eta_h(\beta)=\frac{\epsilon_0 \delta}{2}.
\end{equation}
Now we have the following observations. If the delayed time function is increasing on $(u_{k,i},u_{k,i}+\delta)$, then
there is no $s\in(u_{k,i},u_{k,i}+\delta)\cap[0,\beta]$  such that
$[s]_h-[\tau_h([s]_h)]_h\geq t_{j_{k,i}+1}$. If $i>0$, then $t_{j_{k,i}}=u_{k,i}-\tau(u_{k,i})$.
In fact, if the inequality were true, then the estimates
$$
e_0^*\leq t_{j_{k,i}+1}-t_{j_{k,i}}\leq [s]_h-[\tau_h([s]_h)]_h-(u_{k,i}-\tau(u_{k,i}))\leq
s-\tau(s)+\epsilon_0\delta-(u_{k,i}-\tau(u_{k,i}))\leq (N_0+\epsilon_0)\delta
$$
would contradict  the definitions of $s$, $\delta$ and $h$, \eref{e863} and \eref{e865}.
For $i=0$ the definition of $e_0^*$ yields the contradiction
$$
e_0^*\leq t_{j_{k,i}+1}-(t_{k}-\tau(t_{k}))\leq [s]_h-[\tau_h([s]_h)]_h-(t_{k}-\tau(t_{k}))\leq
s-\tau(s)+\epsilon_0\delta-(t_{k}-\tau(t_{k}))\leq (N_0+\epsilon_0)\delta.
$$
In a similar manner we can check that
if the delayed time function is decreasing on $(u_{k,i},u_{k,i}+\delta)$, then
there is no $s\in(u_{k,i},u_{k,i}+\delta)\cap[0,\beta]$  such that
$[s]_h-[\tau_h([s]_h)]_h< t_{j_{k,i}}$.
If the delayed time function is increasing on $(u_{k,i+1}-\delta,u_{k,i+1})$, then
there is no $s\in(u_{k,i+1}-\delta,u_{k,i+1})\cap[0,\beta]$  such that
$[s]_h-[\tau_h([s]_h)]_h< t_{j_{k,i}}$. Finally, if the delayed time function is decreasing on $(u_{k,i+1}-\delta,u_{k,i+1})$, then  there is no $s\in(u_{k,i+1}-\delta,u_{k,i+1})\cap[0,\beta]$  such that
$[s]_h-[\tau_h([s]_h)]_h\geq t_{j_{k,i}+1}$.

We recall the notation that $s-\tau(s)\in \calI_{j_{k,i}}=[t_{j_{k,i}},t_{j_{k,i}+1})$ for $s\in(u_{k,i},u_{k,i+1})$.
We define the disjoint sets
\begin{align*}
A_{k,i}^h&=\{s\in(u_{k,i},u_{k,i}+\delta)\sth [s]_h-[\tau_h([s]_h)]_h<t_{j_{k,i}}\quad\mbox{or}\quad
[s]_h-[\tau_h([s]_h)]_h\geq t_{j_{k,i}+1}\},\\
B_{k,i}^h&=\{s\in(u_{k,i},u_{k,i+1})\sth [s]_h-[\tau_h([s]_h)]_h\in\calI_{j_{k,i}}\},\\
C_{k,i}^h&=\{s\in(u_{k,i+1}-\delta,u_{k,i+1})\sth [s]_h-[\tau_h([s]_h)]_h<t_{j_{k,i}}\quad\mbox{or}\quad
[s]_h-[\tau_h([s]_h)]_h\geq t_{j_{k,i}+1}\}.
\end{align*}

Next we show that
\begin{equation}\label{e875}
(u_{k,i}+\delta,u_{k,i+1}-\delta)\subset B_{k,i}^h.
\end{equation}
We have four cases: (i)  $t_{j_{k,i}}=u_{k,i}-\tau(u_{k,i})$ and $t_{j_{k,i+1}}=u_{k,i+1}-\tau(u_{k,i+1})$; (ii)   $t_{j_{k,i+1}}=u_{k,i}-\tau(u_{k,i})$ and $t_{j_{k,i}}=u_{k,i+1}-\tau(u_{k,i+1})$; (iii) $t_{j_{k,i}}=u_{k,i}-\tau(u_{k,i})=
u_{k,i+1}-\tau(u_{k,i+1})$, and $s-\tau(s)>t_{j_{k,i}}$ for $s\in(u_{k,i},u_{k,i+1})$; and (iv)
$t_{j_{k,i+1}}=u_{k,i}-\tau(u_{k,i})=u_{k,i+1}-\tau(u_{k,i+1})$, and $s-\tau(s)<t_{j_{k,i+1}}$ for $s\in(u_{k,i},u_{k,i+1})$.
The definition of $e_0^*$ yields that $t_{j_{k,i+1}}-t_{j_{k,i}}\geq e_0^*>
4\epsilon_0\delta$.

In case (i) first suppose that the delayed time function is monotone increasing on $(u_{k,i},u_{k,i+1})$.
The definition of $\epsilon_0$ yields that
$$
(u_{k,i}+\delta)-\tau(u_{k,i}+\delta)-(u_{k,i}-\tau(u_{k,i}))\geq \epsilon_0\delta,
$$
hence $s-\tau(s)>t_{j_{k,i}}+\epsilon_0\delta$ for $s\in(u_{k,i}+\delta,u_{k,i+1})$.
Similarly, we get that $s-\tau(s)<t_{j_{k,i+1}}-\epsilon_0\delta$ for $s\in(u_{k,i},u_{k,i+1}-\delta)$.
If the delayed time function is not monotone increasing, then it has local extrema between $u_{k,i}$ and $u_{k,i+1}$.
The definitions of $e_1^*$ and $\delta$ imply for any local maximum point $s_1$ that
$s_1-\tau(s_1)\leq t_{j_{k,i+1}}-e_1^*<t_{j_{k,i+1}}-\epsilon_0\delta$, and for any local minimum point $s_2$,
it follows $s_2-\tau(s_2)> t_{j_{k,i}}+\epsilon_0\delta$. Therefore, in both cases, we have
$t_{j_{k,i}}+\epsilon_0\delta<s-\tau(s)<t_{j_{k,i+1}}-\epsilon_0\delta$ for $s\in(u_{k,i}+\delta,u_{k,i+1}-\delta)$.
Then for $s\in(u_{k,i}+\delta,u_{k,i+1}-\delta)\cap[0,\beta]$ it follows from \eref{e863},  \eref{e865}
and the above estimates
that
$$
t_{j_{k,i}}< s-\tau(s)-\epsilon_0\delta\leq [s]_h-[\tau_h([s]_h)]_h\leq s-\tau(s)+\epsilon_0\delta<  t_{j_{k,i}+1},
$$
which proves \eref{e875} in case (i). In case (ii) the proof of relation \eref{e875} is similar.

In case (iii) the delayed time function has a  maximal value at a point $s^*\in(u_{k,i},u_{k,i+1})$.
The definitions of $e_1^*$ and $\delta$ imply that
$t_{j_{k,i+1}}-\epsilon_0\delta>t_{j_{k,i+1}}-e_1^*\geq s^*-\tau(s^*)$.
Then, as in case (i), it is easy to obtain that
$s-\tau(s)>t_{j_{k,i}}+\epsilon_0\delta$ for $s\in(u_{k,i}+\delta,u_{k,i+1}-\delta)$, and hence for such $s$ we get
$$
t_{j_{k,i}}< s-\tau(s)-\epsilon_0\delta\leq [s]_h-[\tau_h([s]_h)]_h\leq s-\tau(s)+\epsilon_0\delta
\leq s^*-\tau(s^*)+\epsilon_0\delta  <t_{j_{k,i}+1}.
$$
This proves \eref{e875} in case (iii). The proof of \eref{e875} for case (iv) is similar.

Next we estimate the Lebesgue measure of $A_{k,i}^h$.
Let $v_{k,i}^h$ be the largest element of $A_{k,i}^h\cap [t_k,t]\cap\setN_0h$.
Then $A_{k,i}^h\cap [t_k,t]\subset (u_{k,i},v_{k,i}^h+h)$.
We have two cases: either (i) $u_{k,i}\in(s_j+\delta,s_{j+1}-\delta)$ for some $s_j\in\calM^+$, or (ii)
$u_{k,i}\in(s_j+\delta,s_{j+1}-\delta)$ for some $s_j\in\calM^-$. In case (i)
the definition of $A_{k,i}^h$ implies
$$
v_{k,i}^h-\tau_h(v_{k,i}^h)\leq v_{k,i}^h-[\tau_h(v_{k,i}^h)]_h<t_{j_{k,i}}\leq u_{k,i}-\tau(u_{k,i}),
$$
therefore we obtain from the definition of $\epsilon_0$ and the monotonicity of the delayed time function that
\begin{align*}
\epsilon_0(v_{k,i}^h-u_{k,i})\leq v_{k,i}^h-\tau(v_{k,i}^h)-(u_{k,i}-\tau(u_{k,i}))<\tau_h(v_{k,i}^h)-\tau(v_{k,i}^h).
\end{align*}
In case (ii)  the delayed time function is decreasing on $(u_{k,i},u_{k,i}+\delta)$, and
$$
v_{k,i}^h-(\tau_h(v_{k,i}^h)-h)> v_{k,i}^h-[\tau_h(v_{k,i}^h)]_h\geq t_{j_{i+1}}\geq u_{k,i}-\tau(u_{k,i}),
$$
we get
\begin{align*}
-\epsilon_0(v_{k,i}^h-u_{k,i})\geq v_{k,i}^h-\tau(v_{k,i}^h)-(u_{k,i}-\tau(u_{k,i}))
>\tau_h(v_{k,i}^h)-\tau(v_{k,i}^h)-h.
\end{align*}
Therefore in both cases the Lebesgue measure of $A_{k,i}^h$ can be estimated as
\begin{equation}\label{e867a}
m\Bigl(A_{k,i}^h\cap [t_k,t]\Bigr)\leq \frac{1}{\epsilon_0} |\tau_h(v_{k,i}^h)-\tau(v_{k,i}^h)|+\left(\frac{1}{\epsilon_0}+1\right)h,\qquad i=0,\ldots,i_{k,t}.
\end{equation}
Using that $\tau_h$ and $\tau$ are absolutely continuous, we get from \eref{e1b} and \eref{ae1b} that
\begin{align*}
 |\tau_h(v_{k,i}^h)-\tau(v_{k,i}^h)|
 &= \Bigl|\tau_h(u_{k,i}^h)-\tau(u_{k,i}^h)+\int_{u_{k,i}^h}^{v_{k,i}^h}\Bigl(g([s]_h,x_h([s]_h),\tau_h([s]_h)-g(s,x(s),\tau(s)))\Bigr)ds
 \Bigr|\\
 &\leq |\tau_h(u_{k,i}^h)-\tau(u_{k,i}^h)|+\int_{u_{k,i}^h}^{v_{k,i}^h} L_2\Bigl(h+|x_h([s]_h)-x([s]_h)|+|x([s]_h)-x(s)|\nl
 &\qquad +|\tau_h([s]_h)-\tau([s]_h)|+|\tau([s]_h)-\tau(s)|\Bigr)ds.
\end{align*}
Let $i_{k,t}$ be the largest index such that $u_{k,i_{k,t}}<t$, and $\bar i_{k,t}$ be the largest index so that
$u_{k,\bar i_{k,t}+1}-\delta<t$.
Since $x$ and $\tau$ are Lipschitz continuous on $[u_{k,i},u_{k,i}+\delta]$, we get
for $i=0,\ldots,i_{k,t}$ that
\begin{align*}
 \eta_h(v_{k,i}^h)
 &\leq \eta_h(u_{k,i}^h)+ L_2(1+M_2^*+M_3^*)\delta h +\int_{u_{k,i}^h}^{\min\{u_{k,i}^h+\delta,t\}} L_2\omega_h([s]_h)\,ds.
\end{align*}
Hence, combining it with \eref{e867a} yields
\begin{equation}\label{e867}
m\Bigl(A_{k,i}^h\cap [t_k,t]\Bigr)\leq \frac{1}{\epsilon_0} \eta_h(u_{k,i}^h)+ \Bigl(K_0+1\Bigr)h +\frac{L_2}{\epsilon_0}\int_{u_{k,i}^h}^{\min\{u_{k,i}^h+\delta,t\}} \omega_h([s]_h)\,ds,\qquad i=0,\ldots,i_{k,t}.
\end{equation}

In a similar way  we can show that
\begin{equation}\label{e868}
m\Bigl(C_{k,i}^h\cap [t_k,t]\Bigr)\leq \frac{1}{\epsilon_0} \eta_h(u_{k,i+1}^h-\delta)+ K_0h +\frac{L_2}{\epsilon_0}\int_{u_{k,i+1}^h-\delta}^{\min\{u_{k,i+1}^h,t\}} \omega_h([s]_h)\,ds,\qquad i=0,\ldots,\bar i_{k,t}.
\end{equation}
Now we are ready to estimate the integral with the delayed terms.
On the set $B_{k,i}^h$ we can use Lipschitz
continuity of $x$ with Lipschitz constant $M_2^*$ and estimate \eref{e863}, and on the sets $A_{k,i}^h$
and $C_{k,i}^h$ we use estimates \eref{e867} and \eref{e868}, hence
for
$t\in\calI_k\cap[0,\beta]$ we get
\begin{align}
\displaystyle
 \int_{t_k}^{t} &|x([s]_h-[\tau_h([s]_h)]_h)-x(s-\tau(s))|\,ds\nl
 &=\sum_{i=0}^{i_{k,t}-1} \! \int_{A_{k,i}^h}\! |x([s]_h-[\tau_h([s]_h)]_h)-x(s-\tau(s))|\,ds\nl
 &\quad  +\int_{A_{k,i_{k,t}}^h\cap[u_{k,i_{k,t}}^h,t)}\! |x([s]_h-[\tau_h([s]_h)]_h)-x(s-\tau(s))|\,ds\nl
 &\quad+\sum_{i=0}^{i_{k,t}-1} \! \int_{B_{k,i}^h}\! |x([s]_h-[\tau_h([s]_h)]_h)-x(s-\tau(s))|\,ds\nl
 &\quad  +\int_{B_{k,i_{k,t}}^h\cap[u_{k,i_{k,t}}^h,t)}\! |x([s]_h-[\tau_h([s]_h)]_h)-x(s-\tau(s))|\,ds\nl
 &\quad+\sum_{i=0}^{i_{k,t}-1}  \int_{C_{k,i}^h}\! |x([s]_h-[\tau_h([s]_h)]_h)-x(s-\tau(s))|\,ds\nl
 &\quad  +\int_{C_{k,i_{k,t}}^h\cap[u_{k,i_{k,t}}^h,t)}\! |x([s]_h-[\tau_h([s]_h)]_h)-x(s-\tau(s))|\,ds\nl
 &\leq 2M_1^*\sum_{i=0}^{i_{k,t}-1}m(A_{k,i}^h)+ 2M_1^*m\Bigl(A_{k,i_{k,t}}^h\cap[u_{k,i_{k,t}},t)\Bigr)
 + \sum_{i=0}^{i_{k,t}-1} M_2^*\int_{B_{k,i}^h}(Mh+\eta_h(s))\,ds\nl
 &\quad+M_2^*\int_{B_{k,i_{k,t}}^h\cap[u_{k,i_{k,t}},t)}(Mh+\eta_h(s))\,ds
 +2M_1^*\sum_{i=0}^{i_{k,t}-1}m(C_{k,i}^h)+2M_1^*m\Bigl(C_{k,i_{k,t}}^h\cap[u_{k,i_{k,t}},t)\Bigr)\nl
 &\leq K_4\sum_{i=0}^{i_{k,t}} \eta_h(u_{k,i}^h)
 +K_4\sum_{i=0}^{\bar i_{k,t}}\eta_h(u_{k,i+1}^h-\delta)
 +K_1h+\frac{L_2}{\epsilon_0}\int_{t_k}^t \omega_h([s]_h)\,ds+M_2^*\int_{t_k}^t \eta_h(s)\,ds.\label{e869}
\end{align}
Since $K_1\geq 2M_1^*$, \eref{e869} holds for $t\in(t_k,t_k+h)$ too.

We introduce the simplifying notations $w_{k,2i}^h=u_{k,i}^h$ and $w_{k,2i+1}^h=u_{k,i+1}^h-\delta$ for
$i=0,\ldots,i_k$, $w_{k,2i_k+2}=u_{i_k+1}=t_{k+1}$, and let $i_{k,t}^*$ be the largest index such that $w_{k,i_{k,t}^*}^h<t$.
Then
$$
t_k=w_{k,0}^h<w_{k,1}^h<\cdots < w_{k,i_{k,t}^*}^h<t<t_{k+1}.
$$
We comment that $z_h$, $\eta_h$ and $\omega_h$ are continuous at the points $w_{k,1}^h,\cdots, w_{k,i_{k,t}^*}^h$.

Combining \eref{e210a}, \eref{e860} and \eref{e869},  we get for $t\in\calI_k\cap[0,\beta]$ and $k=0,1,\ldots,k_0$
\begin{align}
  z_h(t) &\leq z_h(t_k)+\mu_{1,h}
  +K_2h+K_4\sum_{i=0}^{i_{k,t}^*} \eta_h(w_{k,i}^h)\nl
  &\quad +\int_{t_k}^{t} L_1\Bigl(z_h([s]_h)+z_h([s]_h-[\tau_h([s]_h)]_h)+\frac{L_2}{\epsilon_0}\omega_h([s]_h)+M_2^*\eta_h(s)\Bigr)\,ds.
  \label{e210}
\end{align}

Similarly, taking the difference of \eref{e205} and \eref{e205b}, and using assumption (A2)(iii), and the definition of $\mu_{2,h}$,
we get for $t\in\calI_k$, $k=0,\ldots,k_0$
\begin{align*}
  |\tau_h(t)-\tau(t)|
  &\leq |\tau_h(t_k)-\tau(t_k)|
  +\int_{t_k}^t |g([s]_h,x_h([s]_h),\tau_h([s]_h))-g(s,x(s),\tau(s))|\,ds\\
  &\leq |\tau_h(t_k)-\tau(t_k)|
  +\int_{t_k}^t |g([s]_h,x(s),\tau(s))-g(s,x(s),\tau(s))|\,ds\\
  &\quad +\int_{t_k}^t |g([s]_h,x_h([s]_h),\tau_h([s]_h))-g([s]_h,x(s),\tau(s))|\,ds\\
  &\leq |\tau_h(t_k)-\tau(t_k)|+\mu_{2,h}
   +\int_{t_k}^t L_2\Bigl(|x_h([s]_h)-x([s]_h)|+|x([s]_h)-x(s)|\\
   &\qquad+|\tau_h([s]_h))-\tau([s]_h)|+|\tau([s]_h))-\tau(s)|\Bigr)\,ds.
\end{align*}
Hence, using the definitions of $z_h$ and $\eta_h$ and relations \eref{e216}, \eref{e415},
\eref{e415b} and \eref{e860}, we get
for $t\in\calI_k$, $k=0,\ldots,k_0$
\begin{align}
  \eta_h(t)
  &\leq \eta_h(t_k)+\mu_{2,h}+K_3h
   +\int_{t_k}^t L_2\Bigl(z_h([s]_h)+\eta_h([s]_h)\Bigr)\,ds, \quad t\in\calI_k,\ k=0,\ldots,k_0.\label{e211}
\end{align}
Adding \eref{e210} and \eref{e211} and using the definitions of $\omega_h$, $D_h$ and $L_0$, we get for $t\in\calI_k\cap[0,\beta]$ and $k=0,\ldots,k_0$
\begin{align}
 \omega_h(t)
 &\leq \omega_h(t_k)+D_h + K_4\sum_{i=0}^{i_{k,t}^*} \eta_h(w_{k,i}^h)
 +\int_{t_k}^t L_0\Bigl(\omega_h([s]_h)+\omega_h(s)+\omega_h([s]_h-[\tau_h([s]_h)]_h)\Bigr)\,ds\nl
 &\leq \omega_h(t_k)+D_h +K_4\sum_{i=0}^{i_{k,t}^*} \omega_h(w_{k,i}^h)
 +\int_{t_k}^t 3L_0\sup_{-M_1^*\leq \zeta\leq s}\omega_h(\zeta)\,ds.
 \label{e213}
\end{align}

The continuity of $\tau_h$, $\tau$ and $\eta_h$, the impulsive conditions \eref{e1c} and \eref{ae1c} and assumption (A3) yield
\begin{align}
|\Delta\omega_h(t_k)|&= |\Delta z_h(t_k)|\nl
&= \Bigl| |x_h(t_k)-x(t_k)|-|x_h(t_k-)-x(t_k-)|\Bigr|\nl
&\leq |\Delta x_h(t_k)-\Delta x(t_k)|\nl
&=|I_k(x_h(t_k-))-I_k(x(t_k-))|\nl
&\leq L_3|x_h(t_k-)-x(t_k-)|\nl
&\leq L_3\omega_h(t_k-).\label{e874}
\end{align}
The initial conditions \eref{e2a} and \eref{ae2a} give
$$
\omega_h(t)=|x_h(t)-x(t)|+|\tau_h(t)-\tau(t)|=0,\qquad t\in(-\infty,0].
$$
Suppose $\beta\in\calI_{k^*}$, and $i^*$ be the largest index such that $w_{k^*,i^*}^h<\beta$.
We consider the consecutive intervals
$$
[t_0,w_{0,1}^h),[w_{0,1}^h,w_{0,2}^h),\ldots,[w_{0,2i_0+1}^h,t_1),[t_1,w_{1,1}^h),\ldots,[w_{1,2i_1+1}^h,t_2),
\ldots,[t_{k^*},w_{k^*,1}^h),\ldots,[w_{k^*,i^*}^h,\beta].
$$
We apply inequality \eref{e213}  successively on the above intervals. Since $\omega_h(t_0)=0$,  on the interval
$[t_0,w_{0,1}^h)$ inequality \eref{e213} reduces to
\begin{align}
 \omega_h(t)
 &\leq \omega_h(t_0)+D_h +\int_{t_0}^t 3L_0\sup_{-M_1\leq \zeta\leq s}\omega_h(\zeta)\,ds,\qquad t\in[t_0,w_{0,1}^h).
 \label{e870}
\end{align}
Then inequality \eref{e870} and Lemma~\ref{l00} with $r=M_1^*$, $a_0=0$, $a_1=D_h$, $a_2=0$, $b=3L_0$,
 $c=L_3$, $K=0$ and $T=w_{0,1}^h$, and $w_{k,j}^h\leq\alpha$ and the continuity of $\omega_h$ at $w_{0,1}^h$ yield
\begin{equation}\label{e871}
 \omega_h(t)\leq D_h e^{3L_0 \alpha},\qquad t\in[0,w_{0,1}^h],\quad h\in(0,h_2).
\end{equation}
Suppose
$$
 \omega_h(t)\leq D_h E_k2^{j-1}K_4^{j-1}\Bigl(\sum_{i=0}^k(1+L_3)^i\Bigr)^{j-1}(e^{3L_0 \alpha})^{j},\qquad
 t\in[0,w_{k,j}^h),\quad h\in(0,h_2),
$$
where 
\begin{align*}
 m_0&=0,\qquad m_k= 2i_0+\cdots+2i_{k-1},\qquad k=1,\ldots,k_0,\\
 E_0&=1,\qquad E_k=2^{m_k+k}K_4^{m_k+k}\Bigl(\sum_{i=0}^{k}(1+L_3)^i\Bigr)^{m_{k}-2i_0+2k}(e^{3L_0 \alpha})^{m_k+2k},
\qquad k=1,\ldots,k_0,
\end{align*}
and consider the next interval $[w_{k,j}^h,w_{k,j+1}^h)$. Then \eref{e213}, the continuity of $\omega_h$ at $w_{k,i}^h$ for $i<2i_k+2$ and $K_4\geq1$ imply the
following very rough estimate
\begin{align}
 \omega_h(t)
 &\leq \omega_h(t_k)+D_h +K_4\sum_{i=0}^{j}D_h E_k2^{i-1}K_4^{i-1}\Bigl(\sum_{i=0}^k(1+L_3)^i\Bigr)^{i-1}(e^{3L_0 \alpha})^{i}
 +\int_{t_k}^t 3L_0\sup_{-M_1^*\leq \zeta\leq s}\omega_h(\zeta)\,ds\nl
 &\leq \omega_h(t_k)+D_hE_k\Bigl(1 +\sum_{i=0}^{j} 2^{i-1}\Bigl)K_4^{j}\Bigl(\sum_{i=0}^k(1+L_3)^i\Bigr)^{j-1}(e^{3L_0 \alpha})^{j}
 +\int_{t_k}^t 3L_0\sup_{-M_1^*\leq \zeta\leq s}\omega_h(\zeta)\,ds\nl
 &\leq \omega_h(t_k)+D_hE_k2^{j}K_4^{j}\Bigl(\sum_{i=0}^k(1+L_3)^i\Bigr)^{j-1}(e^{3L_0 \alpha})^{j}
 +\int_{t_k}^t 3L_0\sup_{-M_1^*\leq \zeta\leq s}\omega_h(\zeta)\,ds,\qquad t\in[t_k,w_{k,j+1}^h).
 \label{e872}
\end{align}
Then inequality \eref{e872} and Lemma~\ref{l00} with $r=M_1^*$, $a_0=0$,
$a_1=D_hE_k2^{j}K_4^{j}\Bigl(\sum_{i=0}^k(1+L_3)^i\Bigr)^{j-1}(e^{3L_0 \alpha})^{j}$, $a_2=0$, $b=3L_0$, $c=L_3$, $K=k$ and $T=v_{k,j+1}^h$ yield
\begin{equation}\label{e208c}
 \omega_h(t)\leq D_hE_k2^{j}K_4^{j}\Bigl(\sum_{i=0}^k(1+L_3)^i\Bigr)^{j}(e^{3L_0 \alpha})^{j+1},\qquad
 t\in[0,v_{k,j+1}^h),\quad h\in(0,h_2).
\end{equation}
Then, applying \eref{e874}, $1+L_3\leq\sum_{i=0}^k(1+L_3)^i$, \eref{e208c} and the definition of $E_{k+1}$, we get
$$
\omega_h(t)\leq D_hE_{k+1},\qquad t\in[0,t_{k+1}],\quad h\in(0,h_2),\quad k=0,\ldots,k^*-1,
$$
and \eref{e208c} holds with $k=k^*$ and $j=j^*$. But then
$$
\omega_h(t)\leq D_hE_{k^*}2^{j^*}K_4^{j^*}\Bigl(\sum_{i=0}^{k^*}(1+L_3)^i\Bigr)^{j^*}(e^{3L_0 \alpha})^{j^*+1}\leq D_hE^*,\qquad t\in[0,\beta],\quad h\in(0,h_2),
$$
which contradicts to \eref{e873} and \eref{e865}. Therefore $\beta=\alpha$, and
\begin{equation}\label{e208b}
\omega_h(t)\leq D_h E^*,\qquad t\in[0,\alpha],\quad h\in(0,h_2)
\end{equation}
holds.
The definition of $D_h$ and relations \eref{e212} and  \eref{e208b}  prove the limit relations  \eref{e221}.
\end{proof}
\bigskip

The definition of $D_h$ and \eref{e208b} immediately have the following  corollary.
\medskip

\begin{corollary}\label{cor1}
 Assume that $T>0$ is finite, $\lambda>0$, (A1)--(A4) hold,
 let $\alpha>0$ and $h^*>0$ be defined by Lemma~\ref{l9}.
 Let $(x,\tau)$ be any solution of the \IIVP{e1}{e2} on $(-\infty,\alpha]$,
where the corresponding delayed time function
$t-\tau(t)$ is piecewise strictly monotone  on $[0,\alpha]$, and suppose the impulsive time moments $t_k$ ($k=-\ell_0,\ldots,k_0$) are not local extreme values or extreme points of the delayed time function, and $\dot\tau(0)\neq1$ and $\dot\tau(\alpha-)\neq1$.
Let $(x_h,\tau_h)$ be the solution of the \IIVP{ae1}{ae2} on $(-\infty,\alpha]$ for $0<h<h^*$.
 Then there exist real constants $N>0$ and $h_1\in(0,h^*)$ such that
 \begin{equation}\label{e221b}
  |x_h(t)-x(t)|\leq Nh\qquad\mbox{and}\qquad |\tau_h(t)-\tau(t)|\leq Nh,\qquad t\in[0,\alpha],\quad h\in(0,h_1).
 \end{equation}
\end{corollary}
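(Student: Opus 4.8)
The plan is to extract the corollary straight from the uniform bound \eref{e208b} produced within the proof of Theorem~\ref{t2}, combined with the explicit make-up of the constant $D_h$, reopening none of the delay analysis. The hypotheses assumed here are word-for-word those of Theorem~\ref{t2}: (A1)--(A4), the piecewise strict monotonicity of $t-\tau(t)$ on $[0,\alpha]$, the requirement that each impulsive moment $t_k$ be neither a local extreme value nor an extreme point of the delayed time function, and the endpoint conditions $\dot\tau(0)\neq1$ and $\dot\tau(\alpha-)\neq1$. These distinguishing hypotheses are exactly the ones consumed in Theorem~\ref{t2} to render its amplification constant $E^*$ finite and independent of $h$: they force the constant $\epsilon_0$ of that proof (an essential infimum of $|1-\dot\tau|$ taken off the extremal layers) to be strictly positive, they make the crossing set $\calU$ finite, and they keep $\calU$ disjoint from the extremal set $\calM$, so that $K_0,\dots,K_4$, $L_0$, $m^*$ and hence $E^*$ depend only on the data. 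I may therefore simply invoke \eref{e208b}: for all $h\in(0,h_2)$ and all $t\in[0,\alpha]$,
\[
\omega_h(t)=|x_h(t)-x(t)|+|\tau_h(t)-\tau(t)|\le D_h\,E^*,
\]
with $h_2\in(0,h^*)$ and $E^*$ both fixed and $h$-independent.

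The remaining task is to read off the size of $D_h$. By its definition in the proof of Theorem~\ref{t2},
\[
D_h=\mu_{1,h}+\mu_{2,h}+(K_2+K_3)h,
\]
so $\omega_h(t)\le E^*(K_2+K_3)\,h+E^*\bigl(\mu_{1,h}+\mu_{2,h}\bigr)$. The first summand is already of the asserted shape $Nh$, since $E^*,K_2,K_3$ are $h$-free; thus the corollary reduces entirely to the linear control of $\mu_{1,h}+\mu_{2,h}$. Here I would observe that these two quantities are pure time-sampling defects: in each integrand only the leading (time) slot of $f$, respectively $g$, is displaced from $s$ to $[s]_h$ --- with $|[s]_h-s|\le h$ by \eref{e216} --- while the state slots are held fixed at $x(s),x(s-\tau(s))$ and $\tau(s)$, which remain in the compact set $\cball{\setR^n}{M_1^*}\times\cball{\setR^n}{M_1^*}\times[0,M_1^*]$ by the definition of $M_1^*$. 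Consequently $\mu_{1,h}+\mu_{2,h}$ is bounded by $\alpha$ times the common modulus of continuity of $f$ and $g$ in their time variable over that compact set, evaluated at scale $h$.

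This last point is the only one demanding attention, and it isolates precisely how the present statement differs from Theorem~\ref{t2}. For mere convergence the bound \eref{e212} --- which records $\mu_{1,h}+\mu_{2,h}\to0$ and is all that the continuity of $f,g$ in time supplies --- suffices; for the linear rate one needs the first-order refinement that this time-modulus be $O(h)$, i.e.\ that $f$ and $g$ be Lipschitz in their time variable, whence $\mu_{1,h}+\mu_{2,h}\le Ch$ with $C$ an $h$-free constant. With this in hand, $\omega_h(t)\le N h$ on $[0,\alpha]$ for $N:=E^*(K_2+K_3+C)$ and $h_1:=h_2$, and since $|x_h(t)-x(t)|\le\omega_h(t)$ and $|\tau_h(t)-\tau(t)|\le\omega_h(t)$ both inequalities in \eref{e221b} follow. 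I would emphasize the clean separation of roles: all of the delicate work --- the control of the delay-discretization error $x([s]_h-[\tau_h([s]_h)]_h)-x(s-\tau(s))$ as the delayed argument sweeps across the impulse and discontinuity points --- is performed once in Theorem~\ref{t2} and is exactly where the distinguishing hypotheses enter, their whole effect being packaged into the single finite $h$-independent $E^*$ of \eref{e208b}; the term $\mu_{1,h}+\mu_{2,h}$ is orthogonal to the delay structure and is merely the elementary time-quadrature defect. In this sense the corollary is, as claimed, immediate from \eref{e208b} and the definition of $D_h$.
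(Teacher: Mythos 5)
Your proposal coincides with the paper's own proof: the paper justifies the corollary in a single line, deriving it from the uniform bound \eref{e208b} together with the definition $D_h=\mu_{1,h}+\mu_{2,h}+(K_2+K_3)h$, exactly as you do, with all the delay-discretization work already packaged into the $h$-independent constant $E^*$. The one place where you go beyond the paper is also the one place where the paper is too quick: under (A1)(i) and (A2)(i) the functions $f$ and $g$ are only \emph{continuous} in the time variable, so \eref{e212} yields $\mu_{1,h}+\mu_{2,h}=o(1)$ but not $O(h)$, and your observation that the linear rate \eref{e221b} additionally requires $f$ and $g$ to have an $O(h)$ time-modulus (e.g.\ Lipschitz continuity in $t$) on the relevant compact set is correct --- this hypothesis is tacitly needed but not stated in the corollary. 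With that caveat your argument is the paper's argument, with the constant $N=E^*(K_2+K_3+C)$ made explicit.
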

\bigskip

Next we formulate the following version of the existence and uniqueness result for the \IIVP{e1}{e2}
for a case when the delayed time function is piecewise strictly monotone.
\smallskip

\begin{theorem}\label{t3}
Assume (A1)--(A4),  let $\lambda>0$,
 let  $\alpha>0$ and $h^*>0$ be defined by Lemma~\ref{l9}, and $(x_h,\tau_h)$ be
the solution of the \IIVP{ae1}{ae2}  on $(-\infty,\alpha]$ for $0<h<h^*$.
Suppose further that there exists a positive sequence $h_i$ tending to 0 such that
 $$
 \lim_{i\to\infty}|x_{h_i}-x|_\PCb=0\qquad\mbox{and}\qquad  \lim_{i\to\infty}|\tau_{h_i}-\tau|_{C([0,\alpha],\setR)}=0
 $$
 for some functions $x\in\PCb$ and $\tau\in C([0,\alpha],\setR)$. Suppose that the corresponding delayed time function $t-\tau(t)$
 is piecewise strictly monotone on $[0,\alpha]$, and  the impulsive time moments $t_k$ ($k=-\ell_0,\ldots,k_0$) are not local extreme values or extreme points of the delayed time function, and $\dot\tau(0)\neq1$ and $\dot\tau(\alpha-)\neq1$. Then
 $(x,\tau)$ is the unique solution of the \IIVP{e1}{e2}  on $[0,\alpha]$ which
satisfies \eref{e314a}--\eref{e315b} with some nonnegative constants $M_1$, $M_2$ and $M_3$.
\end{theorem}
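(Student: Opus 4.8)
The plan is to reuse the limiting scheme from the proof of Theorem~\ref{t1}, with piecewise strict monotonicity replacing global strict monotonicity in every place where a null-set argument for the delayed term is needed. Since here the convergence of the subsequence $(x_{h_i},\tau_{h_i})$ is an \emph{assumption} rather than an output of Arzel\`a--Ascoli, the first task is only to verify that the given limit $(x,\tau)$ solves the \IIVP{e1}{e2}. I would first pass to the limit in the integral forms of \eref{ae1b} and \eref{ae1} on each $\calI_k$, $k=0,\ldots,k_0$. The $\tau$-equation is immediate: $g$ is continuous, $\tau_{h_i}([s]_{h_i})\to\tau(s)$ everywhere, and $x_{h_i}([s]_{h_i})\to x(s)$ for a.e.\ $s$ (off the finite set $\calT^*$), so Lebesgue's dominated convergence theorem gives $\tau(t)=\tau(t_k)+\int_{t_k}^t g(s,x(s),\tau(s))\,ds$, i.e.\ \eref{e1b}.

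The delicate point, exactly as in Theorem~\ref{t1}, is the delayed term $x_{h_i}([s]_{h_i}-[\tau_{h_i}([s]_{h_i})]_{h_i})$. Here I would reprove that the set $\calU$ of \eref{e855} has Lebesgue measure zero, now from piecewise strict monotonicity: on each strictly monotone piece of $t-\tau(t)$ the equation $s-\tau(s)=t_k$ has at most one solution, and since by hypothesis no $t_k$ ($k=-\ell_0,\ldots,k_0$) is a local extreme value of $t-\tau(t)$, no whole subinterval is mapped onto a level $t_k$; hence $\calU$ is finite. Consequently $s-\tau(s)\notin\calT^*$ for a.e.\ $s$, so $x$ is continuous there, and combining the uniform convergence $x_{h_i}\to x$ with $[s]_{h_i}-[\tau_{h_i}([s]_{h_i})]_{h_i}\to s-\tau(s)$ yields a.e.\ convergence of the delayed term. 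Dominated convergence then produces $x(t)=x(t_k)+\int_{t_k}^t f(s,x(s),x(s-\tau(s)))\,ds$, i.e.\ \eref{e1}, while \eref{e1c} passes to the limit through $x_{h_i}(t_k-)\to x(t_k-)$ and continuity of $I_k$, verbatim as in Theorem~\ref{t1}. Extending by $\phi$ and $\lambda$ on $(-\infty,0]$ gives a solution. The bounds \eref{e314}, \eref{e315} and \eref{e315b} follow by letting $i\to\infty$ in \eref{e414}, \eref{e415} and \eref{e415b}, and the strict positivity \eref{e314a} is recovered from (A2)(ii) as in Theorem~\ref{t1}.

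For uniqueness I would mimic the comparison argument of Theorem~\ref{t1}. Let $(\bar x,\bar\tau)$ be another solution obeying \eref{e314a}--\eref{e315b}, let $M_1^*$ bound $|x|+\tau$ and $|\bar x|+\bar\tau$ on $[0,\alpha]$, put $\omega(t)=|x(t)-\bar x(t)|+|\tau(t)-\bar\tau(t)|$ and $\alpha_1=\inf\{t\in[0,\alpha]\sth\omega(t)\neq0\}$, and suppose $\alpha_1<\alpha$; by continuity of $\tau,\bar\tau$, right-continuity of $x,\bar x$ and the identical impulse rule, the two solutions agree at $\alpha_1$. Positivity of both delays together with the bounds yields a uniform lower bound $\delta_1>0$, so on a small right-neighbourhood of $\alpha_1$ the delayed arguments $s-\tau(s)$ and $s-\bar\tau(s)$ stay strictly below $\alpha_1$; hence $x(s-\bar\tau(s))=\bar x(s-\bar\tau(s))$, the cross-solution delay term vanishes, and there remains only $|x(s-\tau(s))-x(s-\bar\tau(s))|$, which one wants to bound by $M_2|\tau(s)-\bar\tau(s)|$ through the Lipschitz estimate \eref{e315}.

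This last bound holds only when both delayed arguments lie in a common mesh interval $\calI_j$, and securing this is the main obstacle; it is precisely where piecewise strict monotonicity and the ``no local extreme value'' hypothesis are indispensable. I would split into two cases. If $\alpha_1$ is interior to a monotone piece of $t-\tau(t)$, then $\dot\tau(\alpha_1)=g(\alpha_1,x(\alpha_1),\tau(\alpha_1))\neq1$, and since $g$ is continuous and the two solutions coincide at $\alpha_1$, both $s-\tau(s)$ and $s-\bar\tau(s)$ are strictly monotone in the same direction, with rate bounded away from $0$, just to the right of $\alpha_1$; thus both delayed arguments stay on the same side of $\alpha_1-\tau(\alpha_1)$ and land in one common interval. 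If instead $\alpha_1$ is a local extremum of $t-\tau(t)$, then $\alpha_1-\tau(\alpha_1)$ is a local extreme value, hence by hypothesis not any $t_k$, so $\alpha_1-\tau(\alpha_1)\notin\calT^*$ lies in the interior of some $\calI_j$ where $x$ is Lipschitz, and for $s$ close to $\alpha_1$ both nearby delayed arguments remain in that $\calI_j$ (the degenerate endpoint situations are excluded by $\dot\tau(0)\neq1$ and $\dot\tau(\alpha-)\neq1$). In either case $|x(s-\tau(s))-x(s-\bar\tau(s))|\leq M_2\eta(s)$ with $\eta=|\tau-\bar\tau|$. Adding the $x$- and $\tau$-estimates as in Theorem~\ref{t1} yields $\omega(t)\leq L_0\int_{\alpha_1}^t\sup_{-M_1^*\leq\zeta\leq s}\omega(\zeta)\,ds$ on a short interval $[\alpha_1,\alpha_1+\delta]$, and since $\omega(\alpha_1)=0$, Gronwall's lemma forces $\omega\equiv0$ there, contradicting the definition of $\alpha_1$. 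Hence $\alpha_1=\alpha$ and the solution satisfying \eref{e314a}--\eref{e315b} is unique.
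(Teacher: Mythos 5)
Your proposal is correct in substance, and its existence half is exactly the paper's: piecewise strict monotonicity makes the set $\calU$ of \eref{e855} a finite (hence null) set, so the a.e.\ convergence \eref{e853} of the delayed term survives, and the rest is a verbatim repetition of Theorem~\ref{t1}. Where you genuinely diverge is the uniqueness half. The paper disposes of uniqueness in two lines: it applies Theorem~\ref{t2} to the competitor solution $(\bar x,\bar\tau)$, so that $x_{h_i}\to\bar x$ and $\tau_{h_i}\to\bar\tau$ uniformly, and uniqueness of limits forces $(\bar x,\bar\tau)=(x,\tau)$. You instead run a self-contained first-separation-point/Gronwall argument modelled on Theorem~\ref{t1}, with a case analysis at $\alpha_1$ (interior to a monotone piece versus local extreme point) that uses the hypotheses ``$t_k$ not extreme values or extreme points'', $\dot\tau(0)\neq1$, $\dot\tau(\alpha-)\neq1$ to force both delayed arguments into one common Lipschitz piece $\calI_j$. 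Each route buys something: the paper's is short but inherits Theorem~\ref{t2}'s hypotheses for the \emph{competitor} — strictly read, it only excludes other solutions whose own delayed time function is piecewise strictly monotone and non-degenerate, a condition not implied by \eref{e314a}--\eref{e315b} and not verified in the paper's proof; your argument avoids the convergence machinery entirely and rules out an \emph{arbitrary} competitor, imposing structure only on $t-\tau(t)$, which matches the stated uniqueness class better.

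One caveat, shared with the paper rather than specific to you: your dichotomy (and the paper's claim $\epsilon_0>0$ in Theorem~\ref{t2}) tacitly assumes the monotonicity partition of $t-\tau(t)$ consists of its local extreme points plus endpoints. Under the paper's definition a partition point can join two pieces monotone in the same direction with $\dot\tau=1$ there (a cubic-type junction); if such a point were $\alpha_1$ with $\alpha_1-\tau(\alpha_1)=t_j$, neither of your cases applies. Since the paper's own chain of reasoning has the same blind spot, your proof is at parity with it on this point.
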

\begin{proof}
The proof  that $(x,\tau)$ is a solution of the \IIVP{e1}{e2} which
satisfies \eref{e314a}--\eref{e315b} follows the same lines as the proof of Theorem~\ref{t1}.
The key observation is that if the delayed time function $t-\tau(t)$ is piecewise strictly monotone, then the set $\calU$ defiend by
\eref{e855} has Lebesgue measure 0, consequently,  relation \eref{e853} holds. The rest of the argument is identical to
that used in the proof of Theorem~\ref{t1}.

For the proof of the uniqueness, assume that $(\bar x,\bar\tau)$ is also a solution of the \IIVP{e1}{e2}.
Theorem~\ref{t2} yields that
 $$
 \lim_{i\to\infty}|x_{h_i}-\bar x|_\PC=0\qquad\mbox{and}\qquad  \lim_{i\to\infty}|\tau_{h_i}-\bar\tau|_{C([0,\alpha],\setR)}=0,
 $$
which implies that $(x,\tau)=(\bar x,\bar\tau)$, i.e., $(x,\tau)$ is the unique solution of the \IIVP{e1}{e2}.
\end{proof}
\medskip

\section{Numerical examples}\label{s5}
\setcounter{equation}0
\setcounter{theorem}0

In this section we present two examples to illustrate the convergence of the numerical
scheme \eref{ae1}--\eref{ae2}.
\medskip

\begin{example}\label{example1}\rm
 Consider the initial value problem (IVP)
 \begin{align}
  \dot x(t) &= -\Bigl(x(t-\tau(t))-1\Bigr) \exp\Bigl(-0.3(x(t)-1) \sin 5t-2\Bigr),\qquad t\in[0,4],\label{ex10}\\
  \dot \tau(t) &=2-\tau(t)+1.5(x(t)-1)\cos 5t,\qquad t\in[0,4],\label{ex11}\\
   x(t) &=
   \left\{\begin{array}{ll}
   e^{-t}+1,\quad &t\in(-3,0],\\
   e^{3}+1,\quad &t\in(-\infty,-3],
   \end{array}\right.\label{ex12}\\
  \tau(t) &=2,\qquad t\in(-\infty,0].\label{ex13}
 \end{align}
 Here we do not have an impulsive condition associated to the IVP.

 Clearly, conditions (A1) (i)--(ii), (A2) (i), (iii), (A3) and (A4) (i), (ii) are satisfied. For condition (A2) (ii)
 consider
 $$
 2+1.5(u-1)\cos 5t.
 $$
 Unfortunately, it is not positive for all $(t,u)\in[0,4]\times \setR$. But define the function
 $$
 g(t,u,w)=
 \left\{\begin{array}{ll}
          2-w-1.5\cos 5t,\qquad u<0,\ w\in\setR,\\
          2-w+1.5(u-1)\cos 5t,\qquad u\in[0,2.3],\ w\in\setR,\\
          2-w+1.95\cos 5t,\qquad u>2.3,\ w\in\setR,\\
        \end{array}\right.
 $$
 Note that $0<\phi(0)<2.3$.
 If we replace \eref{ex11} with $\dot \tau(t)=g(t,x(t),\tau(t))$, then all conditions (A1)--(A4) are satisfied,
 so the corresponding IVP has a unique solution, and until $x(t)\in[0,2.3]$, the solution coincides with
 that of the \IVP{ex10}{ex13}.

 It can be checked that the exact
 solution of the \IVP{ex10}{ex13} is
 \begin{align*}
  x(t) &= e^{-t}+1, \qquad \tau(t)=0.3e^{-t}\sin 5t+2.
 \end{align*}
 Note that the delayed argument function $t-\tau(t)$ is not monotone increasing in this IVP.
 We use our numerical approximation scheme to approximate the solution.
 In the next table we show the numerical values and the corresponding errors for $h=0.1$, $0.01$ and $0.001$.
 We observe convergence of the approximate solution to the exact solution, and it looks that the order of convergence is linear.
 In Figure~1 we plotted the exact and the approximate solution corresponding to $h=0.05$.

 \begin{center}
 \small
 \begin{tabular}{|c|ccccc|}
 \hline
 & \multicolumn{5}{c|}{$h=0.1$}\\
$s_i$  &  $i$   &    $x_h(s_i)$ & $\tau_h(s_i)$ & $e_x(s_i)$  & $e_\tau(s_i)$\\
\hline
1.0 &  10 &  1.32583334  & 1.91672237 &  4.205e-02  &  2.255e-02 \\
2.0 &  20 &  1.08942499  & 2.00168457 &  4.591e-02  &  2.377e-02 \\
3.0 &  30 &  1.00646891  & 2.00745845 &  4.332e-02  &  2.254e-03 \\
4.0 &  40 &  0.97979564  & 1.99518831 &  3.852e-02  &  9.828e-03 \\
\hline
 \hline
 & \multicolumn{5}{c|}{$h=0.01$}\\
$s_i$  &  $i$   &    $x_h(s_i)$ & $\tau_h(s_i)$ & $e_x(s_i)$  & $e_\tau(s_i)$\\
\hline
1.0 & 100 &  1.36446215  & 1.89606294 &  3.417e-03  &  1.893e-03 \\
2.0 & 200 &  1.13159363  & 1.98032187 &  3.742e-03  &  2.409e-03 \\
3.0 & 300 &  1.04629895  & 2.00991036 &  3.488e-03  &  1.976e-04 \\
4.0 & 400 &  1.01527840  & 2.00427610 &  3.037e-03  &  7.403e-04 \\
\hline
 \hline
 & \multicolumn{5}{c|}{$h=0.001$}\\
$s_i$  &  $i$   &    $x_h(s_i)$ & $\tau_h(s_i)$ & $e_x(s_i)$  & $e_\tau(s_i)$\\
\hline
1.0 & 1000 &  1.36753037  & 1.89435808 &  3.491e-04  &  1.886e-04 \\
2.0 & 2000 &  1.13496055  & 1.97815354 &  3.747e-04  &  2.411e-04 \\
3.0 & 3000 &  1.04943371  & 2.00973378 &  3.534e-04  &  2.101e-05 \\
4.0 & 4000 &  1.01801021  & 2.00494219 &  3.054e-04  &  7.416e-05 \\
\hline
 \end{tabular}\\
Table 1: Approximate solution of the \IVP{ex10}{ex13}. $s_i=ih$, $e_x(s_i)=|x_h(s_i)-x(s_i)|$ and $e_\tau(s_i)=|\tau_h(s_i)-\tau(s_i)|$
\end{center}

\begin{center}
\begin{minipage}{0.49\textwidth}
\begin{center}
 {\includegraphics[width=0.85\textwidth]{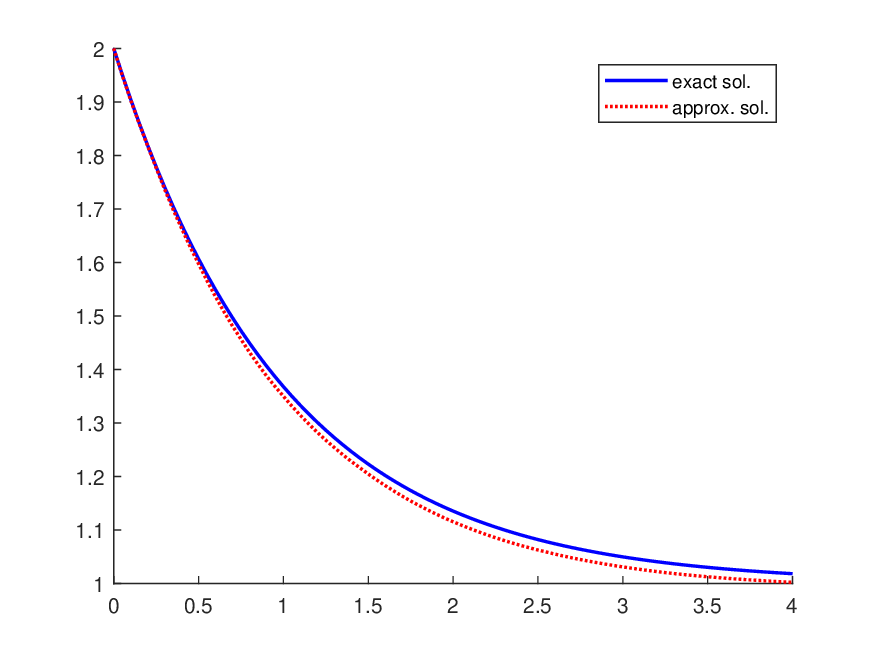}}\\[-0.3cm]
$x_h(t)$
\end{center}
\end{minipage}
 \
 \begin{minipage}{0.49\textwidth}
 \begin{center}
 {\includegraphics[width=0.85\textwidth]{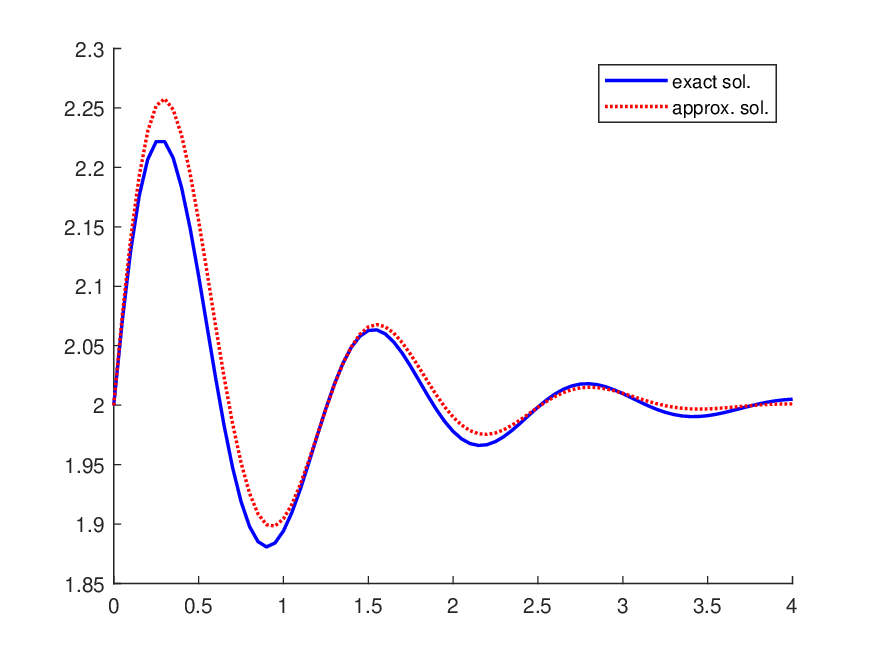}}\\[-0.3cm]
$\tau_h(t)$
\end{center}
\end{minipage}\\
Figure 1: Exact and approximate solution of the \IVP{ex10}{ex13}, $h=0.05$
\end{center}
\end{example}

\begin{example}\rm
 Consider the IIVP
 \begin{align}
  \dot x(t) &= -\frac 15x(t-\tau(t)),\qquad t\in[0,5],\label{ex1}\\
  \dot\tau(t) &= 1+\frac{1}{4}x(t)-\frac 1{2}\tau(t),\qquad t\in[0,5],\label{ex2}\\
  \Delta x\left(\frac 34\right) &= 2,\qquad \Delta x\left(\frac32\right)=-\frac34,\label{ex3}\\
  x(t) &= 1,\qquad t\in[-3,0],\label{ex4}\\
  \tau(t) &= 2,\qquad t\in[-3,0].\label{ex5}
 \end{align}
 In the region $u>-4$ condition $g(t,u,0)>0$ is satisfied, so similarly to the argument used in Example~\ref{example1},
 Theorem~\ref{t1} yields that the \IIVP{ex1}{ex5} has a unique solution on $[0,\alpha]$ for some $\alpha>0$.

 Initially $t-\tau(t)$ is negative, so $x(t-\tau(t))=1$. Hence integrating \eref{ex1} and using $x(0)=1$ we get $x(t)=-\frac15t+1$.
 If we substitute this formula to \eref{ex2}, we can solve the corresponding ODE with the initial condition $\tau(0)=2$, and we
 get $\tau(t)=\frac{27}{10}-\frac 1{10}t-\frac7{10}e^{-\frac 12t}$. We can check that $t-\tau(t)<0$ for $t\in[0,3/4]$, hence
 this is the solution of the IIVP on  $[0,3/4]$.

 Using the first impulsive condition of \eref{ex3} we get $x(t)=-\frac15t+3$ for $t\geq 3/4$ but close to $3/4$.
 Then we can solve
 \eref{ex2} using the initial condition $\tau(3/4)=\frac{21}8-\frac 7{10}e^{-3/8}$, hence we get
 $\tau(t)=\frac{37}{10}-\frac1{10}t-\frac1{10}e^{-\frac12t}\cdot (10e^{3/8}+7)$. We have $t-\tau(t)<0$ for $t\in[3/4,3/2]$, so this is
 the solution on this interval.

 At $t=\frac 32$ we use the second impulsive condition of \eref{ex3}, so we have $x(t)=-\frac15t+\frac94$
 for $t\geq 3/2$ close to $3/2$.
 Then solving the corresponding  \eref{ex2} with the initial condition $\tau(3/2)=\frac{71}{20}-e^{-3/8}-\frac 7{10}e^{-3/4}$
 we get $\tau(t)=\frac{133}{40}-\frac1{10}t-\frac1{40}e^{-\frac12t}(-15e^{3/4}+40e^{3/8}+28)$. We can check that
 $t-\tau(t)<0$ for $t\in[3/2,T_1)$ and $T_1-\tau(T_1)=0$ with $T_1= 2.702320978$. Therefore we get
 $(x(T_1),\tau(T_1))=(2.459535804, 2.871755310)$.

 Hence for $t>T_1$ but close to $T_1$ we have $x(t)=-\frac15t+1$. Using this relation \eref{ex1} and \eref{ex2} are simplified to
 \begin{align*}
  \dot x(t) &= \frac{1}{25}-\frac{1}{25}\tau(t)-\frac15,\\
  \dot\tau(t) &=1+\frac{1}{4}x(t)-\frac 15\tau(t).
 \end{align*}
 We solved this system of ODEs with the initial condition $(x(T_1),\tau(T_1))=(2.459535804, 2.871755310)$ using Maple to get the
 formula of the solution on the next interval: $x(t)=-110.0000000-0.1490476191\cdot \exp(-0.4791287848t)+112.5160738\cdot \exp(-0.02087121525t)+2.000000001t$ and $\tau(t)=-55.00000002+58.70867993\cdot \exp(-0.02087121525t)-1.785325143\cdot \exp(-0.4791287848t)+1.000000001t$. We get that $0\leq t-\tau(t)<\frac 34$ for $t\in[T_1,T_2)$ and $T_2-\tau(T_2)=\frac 34$
 for $T_2=3.488254843$.
 Then we have the solution of the \IIVP{ex1}{ex5} on $[T_1,T_2]$. In a similar way we can solve the IVP on a next interval.
 We can obtain that the exact solution of the \IIVP{ex1}{ex5} has the formula
 $$
 x(t)=\left\{
 \begin{array}{l}
  -\frac15t+1,\qquad t\in[0,0.75),\\
  -\frac15t+3,\qquad t\in[0.75,1.5),\\
  -\frac15t+\frac94,\qquad t\in[1.5,2.702320978),\\
  -110.0000000-0.1490476191e^{-0.4791287848t}+112.5160738e^{-0.02087121525t}\\
  \qquad +2.000000001t,\qquad   t\in[2.702320978,3.488254843),\\
  -130.0000000-0.3512993334e^{-0.4791287848t}+134.0673650e^{-0.02087121525t}\\
  \qquad +2.000000001t,\qquad t\in[3.488254843,4.218920247),\\
  -122.5000000-0.2436621429e^{-0.4791287848t}+125.8614408e^{-0.02087121525t}\\
  \qquad +2.000000001t,\qquad t\in[4.218920247,5],
 \end{array}\right.
 $$
and
 $$
 \tau(t)=\left\{
 \begin{array}{l}
  \frac{27}{10}-\frac 1{10}t-\frac7{10}e^{-\frac 12t},\qquad t\in[0,0.75),\\
  \frac{37}{10}-\frac1{10}t-\frac1{10}e^{-\frac12t}\cdot (10e^{3/8}+7),\qquad t\in[0.75,1.5),\\
  \frac{133}{40}-\frac1{10}t-\frac1{40}e^{-\frac12t}(-15e^{3/4}+40e^{3/8}+28),\qquad t\in[1.5,2.702320978),\\
  -55.00000002+58.70867993 e^{-0.02087121525t}-1.785325143 e^{-0.4791287848t}\\
  \qquad +1.000000001t,\qquad   t\in[2.702320978,3.488254843),\\
  -65.00000002+69.95372081 e^{-0.02087121525t}-4.207940644 e^{-0.4791287848t}\\
  \qquad +1.000000001t,\qquad t\in[3.488254843,4.218920247),\\
  -61.25000002+65.67203060 e^{-0.02087121525t}-2.918638192 e^{-0.4791287848t}\\
  \qquad +1.000000001t,\qquad t\in[4.218920247,5].
 \end{array}\right.
 $$
 Table 2 below contains the approximate solutions of the \IIVP{ex1}{ex5} for $h=0.1$, $0.01$ and $0.001$.
 Since the exact solution $x(t)$ is linear on $[0,3/4)$, $[3/4,3/2)$ and on $[3/2,T_1)$, the approximate
 solution \eref{ae1}--\eref{ae2} gives back the exact solution. In the next table we observe
 rounding error only due to the calculations. But for $t>T_1$ and for the $\tau$ component we see that
 if the stepsize $h$ decreases, the error decreases too. Figure 2 shows the numerical solution for $h=0.1$.
 Even for this relatively big stepsize the approximation is  good.

 \begin{center}
 \small
 \begin{tabular}{|c|ccccc|}
 \hline
 & \multicolumn{5}{c|}{$h=0.1$}\\
$s_i$  &  $i$   &    $x_h(s_i)$ & $\tau_h(s_i)$ & $e_x(s_i)$  & $e_\tau(s_i)$\\
\hline
1.0 &  10 &  2.80000000  & 2.27838414 &  0.000e+00  &  1.455e-02 \\
2.0 &  20 &  1.85000000  & 2.63913961 &  4.441e-16  &  1.487e-02 \\
3.0 &  30 &  1.65120000  & 2.73410744 &  4.520e-04  &  1.277e-02 \\
4.0 &  40 &  1.28120000  & 2.74384615 &  3.642e-03  &  1.213e-02 \\
5.0 &  50 &  0.85780000  & 2.65671910 &  9.131e-03  &  8.423e-03 \\
\hline
 \hline
 & \multicolumn{5}{c|}{$h=0.01$}\\
$s_i$  &  $i$   &    $x_h(s_i)$ & $\tau_h(s_i)$ & $e_x(s_i)$  & $e_\tau(s_i)$\\
\hline
1.0 & 100 &  2.80000000  & 2.28930718 &  5.329e-15  &  3.624e-03 \\
2.0 & 200 &  1.85000000  & 2.62435492 &  1.599e-14  &  8.166e-05 \\
3.0 & 300 &  1.65165200  & 2.72176158 &  3.649e-09  &  4.283e-04 \\
4.0 & 400 &  1.27822400  & 2.73245016 &  6.656e-04  &  7.387e-04 \\
5.0 & 500 &  0.86738000  & 2.64907081 &  4.490e-04  &  7.750e-04 \\
\hline
 \hline
 & \multicolumn{5}{c|}{$h=0.001$}\\
$s_i$  &  $i$   &    $x_h(s_i)$ & $\tau_h(s_i)$ & $e_x(s_i)$  & $e_\tau(s_i)$\\
\hline
1.0 & 1000 &  2.80000000  & 2.29257085 &  2.220e-14  &  3.608e-04 \\
2.0 & 2000 &  1.85000000  & 2.62428141 &  4.396e-14  &  8.156e-06 \\
3.0 & 3000 &  1.65165184  & 2.72137602 &  1.636e-07  &  4.270e-05 \\
4.0 & 4000 &  1.27825344  & 2.73185604 &  6.950e-04  &  1.446e-04 \\
5.0 & 5000 &  0.86745468  & 2.64851590 &  5.237e-04  &  2.201e-04 \\
\hline
 \end{tabular}\\
Table 2: Approximate solution of the \IIVP{ex1}{ex5}. $s_i=ih$, $e_x(s_i)=|x_h(s_i)-x(s_i)|$ and $e_\tau(s_i)=|\tau_h(s_i)-\tau(s_i)|$
\end{center}

\begin{center}
\begin{minipage}{0.49\textwidth}
\begin{center}
 {\includegraphics[width=0.85\textwidth]{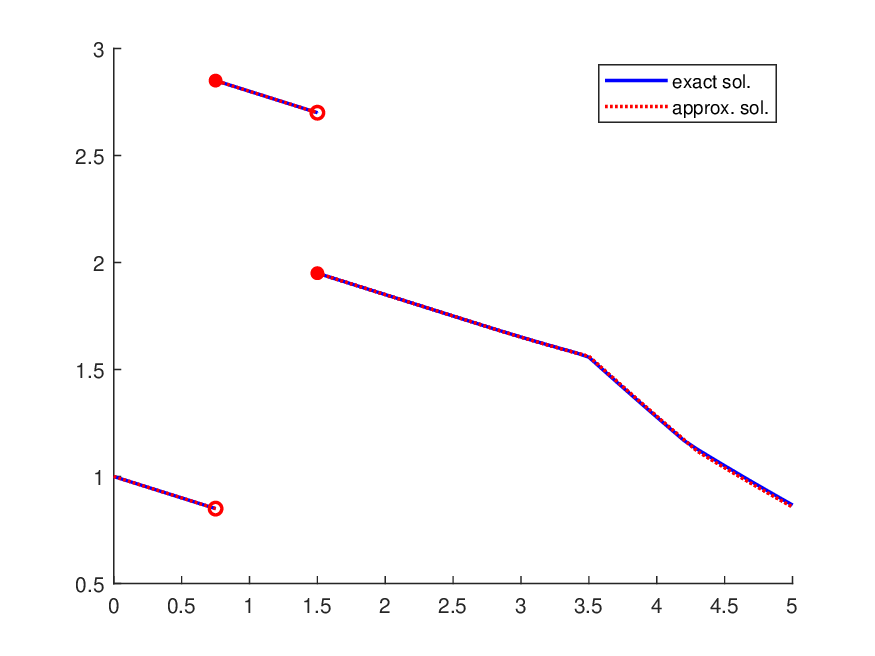}}\\[-0.3cm]
$x_h(t)$
\end{center}
\end{minipage}
 \
 \begin{minipage}{0.49\textwidth}
 \begin{center}
 {\includegraphics[width=0.85\textwidth]{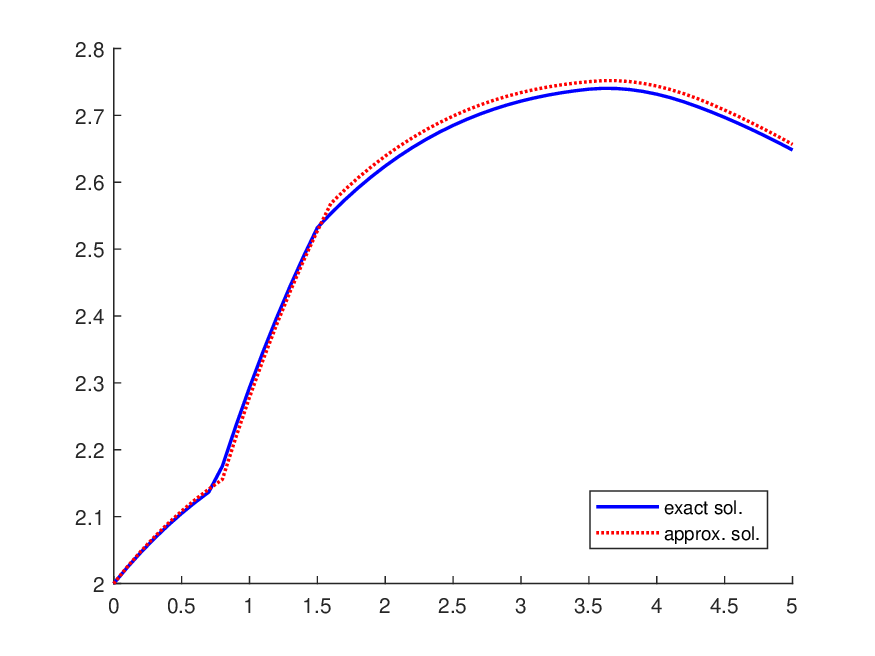}}\\[-0.3cm]
$\tau_h(t)$
\end{center}
\end{minipage}\\
Figure 2: Exact and approximate solution of the \IVP{ex1}{ex5}, $h=0.1$
\end{center}
\end{example}

\section{Conclusion}
\setcounter{equation}0
\setcounter{theorem}0

In this paper we considered a class of SD-DDEs with adaptive delays and impulses.
Using a Picard--Lindel\"of type argument, we showed local existence and uniqueness results
under  the condition that the delayed time function is monotone or piecewise strictly monotone. We introduced a numerical scheme with the help of EPCAs, and
obtained a sequence of approximate solutions which converges to a solution of the
SD-DDE under natural conditions. This simple, first-order approximation scheme could be used
successfully in the proof of the existence of a solution in a more complex problem.
An interesting future research problem is to prove the convergence of the numerical approximation under conditions where a more relaxed condition is used instead of the piecewise strict monotonicity.

Finally, we mention that if we replace the impulsive condition \eref{ae1c} with
$$
 \Delta x_h([t_k]_h) = I_k(x_h([t_k]_h-)),\qquad k=1,2,\ldots,K,
$$
then the impulsive time moments appear at  mesh points of the numerical scheme,
which simplifies the computation of the approximation. On the other hand,
since the jump times of the exact solution and the approximate solution are
different (in general), the proof
of the convergence becomes more complicated. This is an open problem
which can be studied in a future work.

\section*{Acknowledgements}

The author thanks the support  of the Hungarian National Research, Development and Innovation Office grant no. K139346.

\bibliographystyle{plain}
\bibliography{hartung}

\begin{thebibliography}{10}

\bibitem{A2007}
B.~Ahmad.
\newblock Instability of impulsive hybrid state dependent delay differential
  systems.
\newblock {\em Vietnam J. Math.}, 35(3):285--298, 2007.

\bibitem{Ak2011}
M.~Akhmet.
\newblock {\em Nonlinear Hybrid Continuous-Discrete-Time Models}.
\newblock Atlantis Press, Amsterdam-Paris, 2011.

\bibitem{AHH1998}
O.~Arino, K.~P. Hadeler, and M.~L. Hbid.
\newblock Existence of periodic solutions for delay differential equations with
  state dependent delay.
\newblock {\em J. Differential Equations}, 144:263--301, 1990.

\bibitem{BK2020}
I.~Bal\'azs and T.~Krisztin.
\newblock A differential equation with a state-dependent queueing delay.
\newblock {\em SIAM J. Math. Anal.}, 52(4):3697--3737, 2020.

\bibitem{BGK2025}
F.A. Bartha, Á. Garab, and T.~Krisztin.
\newblock Morse decomposition of scalar differential equations with
  state-dependent delay.
\newblock {\em J. Dyn. Diff. Equat.}, 2025.
\newblock https://doi.org/10.1007/s10884-025-10414-w.

\bibitem{CPT2019}
S.~Castillo, M.~Pinto, and R.~Torres.
\newblock Asymptotic formulae for solutions to impulsive differential equations
  with piecewise constant argument of generalized type.
\newblock {\em Electron. J. Differential Equations}, 2019(40):1--22, 2019.

\bibitem{CHW2010}
Y.~Chen, Q.~Hu, and J.~Wu.
\newblock Second-order differentiability with respect to parameters for
  differential equations with adaptive delays.
\newblock {\em Front. Math. China}, 5(2):221--286, 2010.

\bibitem{C2022}
K.~E.~M. Church.
\newblock Uniqueness of solutions and linearized stability for impulsive
  differential equations with state-dependent delay.
\newblock {\em J. Differential Equations}, 338:415--440, 2022.

\bibitem{CB1982}
K.~Cooke and S.~Busenberg.
\newblock Models of vertically transmitted diseases with sequential-continuous
  dynamics.
\newblock In {\em Nonlinear Phenomena in Mathematical Sciences: Proceedings of
  an International Conference on Nonlinear Phenomena in Mathematical Sciences,
  Held at the University of Texas at Arlington, June 16-20, 1980}, pages
  179--187, New York, 1982. Academic Press.

\bibitem{Dr1961}
R.~D. Driver.
\newblock Existence theory for a delay-differential system.
\newblock {\em Contrib. Differential Equations}, 1:317--336, 1961.

\bibitem{FO2020}
T.~Faria and J.~J. Oliveira.
\newblock Global asymptotic stability for a periodic delay hematopoiesis model
  with impulses.
\newblock {\em Appl. Math. Model.}, 79:843--864, 2020.

\bibitem{FHI2019}
M.~Fe\u{c}kan, S.~G. Hristova, and K.~Ivanova.
\newblock Practical stability of differential equations with state dependent
  delay and non-instantaneous impulses.
\newblock {\em Commun. Math. Anal.}, 22(2):1--17, 2019.

\bibitem{FW2024}
J.~Frohberg and M.~Waurick.
\newblock State-dependent delay differential equations on {$H^1$}.
\newblock {\em J. Differential Equations}, 410:737--771, 2024.

\bibitem{GTM2023}
A.~Ganesan, M.~Thangaraj, and Y.-K. Ma.
\newblock Exponential stability for second-order neutral stochastic systems
  involving impulses and state-dependent delay.
\newblock {\em Symmetry}, 15(12):2135, 2019.

\bibitem{GHMWW2025}
T.~Gedeon, A.~R. Humphries, M.~C. Mackey, H.-O. Walther, and Z.~Wang.
\newblock Dynamics of a state-dependent delay-differential equation.
\newblock {\em Electron. J. Qual. Theory Differ. Equ.}, 2025(37):1--97, 2025.

\bibitem{Gy1991}
I.~Gy{\H o}ri.
\newblock On approximation of the solutions of delay differential equations by
  using piecewise constant arguments.
\newblock {\em Int. J. Math. Math. Sci.}, 14(1):111--126, 1991.

\bibitem{GyH2008}
I.~Gy{\H o}ri and F.~Hartung.
\newblock On numerical approximation using differential equations with
  piecewise-constant arguments.
\newblock {\em Period. Math. Hungar.}, 56(1):1--12, 2008.

\bibitem{GyHT1995}
I.~Gy{\H o}ri, F.~Hartung, and J.~Turi.
\newblock Numerical approximations for a class of differential equations with
  time- and state-dependent delays.
\newblock {\em Appl. Math. Lett.}, 8(6):19--24, 1995.

\bibitem{Ha1997}
F.~Hartung.
\newblock On differentiability of solutions with respect to parameters in a
  class of functional differential equations.
\newblock {\em Funct. Differ. Equ.}, 4(1-2):65--79, 1997.

\bibitem{Ha2013b}
F.~Hartung.
\newblock On second-order differentiability with respect to parameters for
  differential equations with state-dependent delays,.
\newblock {\em J. Dynam. Differential Equations}, 25(4):1089--1138, 2013.

\bibitem{Ha2022}
F.~Hartung.
\newblock On numerical approximation of a delay differential equation with
  impulsive self-support condition.
\newblock {\em Appl. Math. Comput.}, 418:126818, 2022.

\bibitem{Ha2024}
F.~Hartung.
\newblock On differentiability of solutions with respect to parameters in
  impulsive differential equations with adaptive state-dependent delay.
\newblock {\em J. Differential Equations}, 413:751--804, 2024.

\bibitem{HHT1997}
F.~Hartung, T.~L. Herdman, and J.~Turi.
\newblock On existence, uniqueness and numerical approximation for neutral
  equations with state-dependent delays.
\newblock {\em Appl. Numer. Math.}, 24:393--409, 1997.

\bibitem{HKWW2006}
F.~Hartung, T.~Krisztin, H.-O. Walther, and J.~Wu.
\newblock Functional differential equations with state-dependent delays: theory
  and applications.
\newblock In A.~Ca\~nada, P.~Dr\'abek, and A.~Fonda, editors, {\em Handbook of
  Differential Equations: Ordinary Differential Equations}, volume~3. Elsevier,
  North-Holand, 2006.

\bibitem{HW2023}
E.~Hernandez and J.~Wu.
\newblock Explicit abstract neutral differential equations with state-dependent
  delay: Existence, uniqueness and local well-posedness.
\newblock {\em J. Differential Equations}, 365:750--811, 2023.

\bibitem{LG2023}
B.~Lani-Wayda and J.~Godoy~Mesquita.
\newblock Linearized instability for differential equations with dependence on
  the past derivative.
\newblock {\em Electron. J. Qual. Theory Differ. Equ.}, 2023(52):1--52, 2023.

\bibitem{LP2022}
X.~Li and D.~Peng.
\newblock Uniform stability of nonlinear systems with state-dependent delay.
\newblock {\em Automatica}, 137(110098):1--5, 2022.

\bibitem{LS2022}
X.~Li and S.~Song.
\newblock {\em Impulsive Systems with Delays}.
\newblock Springer Singapore, 2022.

\bibitem{LOX2025}
W.~Lu, D.~O’Regan, and Y.~Xia.
\newblock Smooth invariant manifolds and foliations for the differential
  equations with piecewise constant argument.
\newblock {\em Bull. Sci. Math.}, 199:1--37, art. no. 103579, 2025.

\bibitem{ML2024}
W.~Ma and Y.~Li.
\newblock Differentiability of semi-flow for impulsive evolution equation with
  state-dependent delay.
\newblock {\em Qual. Theory Dyn. Syst.}, 23(Suppl 1):1--27, art. no. 278, 2024.

\bibitem{MA2000}
P.~Magal and O.~Arino.
\newblock Existence of periodic solutions for a state-dependent delay
  differential equation.
\newblock {\em J. Differential Equations}, 165:61--95, 2000.

\bibitem{MSBK2025}
K.~Martinovich, G.~Stepan, D.~Bachrathy, and A.~K. Kiss.
\newblock Introducing state-dependent delay in the car-following model.
\newblock {\em Nonlinear Dynam.}, 113(17):22923--22941, 2025.

\bibitem{PL2020}
S.~H. Park and J.~Lefebvre.
\newblock Synchronization and resilience in the {K}uramoto white matter network
  model with adaptive state-dependent delays.
\newblock {\em J. Math. Neurosci.}, 10(16):1--25, 2020.

\bibitem{Ru1976}
W.~Rudin.
\newblock {\em Principles of Mathematical Analysis}.
\newblock McGraw-Hill, Inc., 1976.

\bibitem{SA2009}
S.~H. Saker and J.~O. Alzabut.
\newblock On the impulsive delay hematopoiesis model with periodic
  coefficients.
\newblock {\em Rocky Mountain J. Math.}, 39(5):1657--1688, 2009.

\bibitem{SL2006}
J.~Shen and X.~Liu.
\newblock Global existence results for impulsive differential equations.
\newblock {\em J. Math. Anal. Appl.}, 314(2):546--557, 2006.

\bibitem{SWLLL2025}
W.~Sun, N.~Wang, N.~Li, Y.~Lv, and S.~Liu.
\newblock A structured predator–prey model with a state-dependent maturation
  period.
\newblock {\em Internat. J. Bifur. Chaos}, 35(10):2550123, 2025.

\bibitem{W2023}
H.-O. Walther.
\newblock On solution manifolds of differential systems with discrete
  state-dependent delays.
\newblock {\em Discrete Contin. Dyn. Syst. Ser. S}, 16(3-4):627--638, 2023.

\bibitem{Wi1983}
J.~Wiener.
\newblock Differential equations with piecewise constant delays.
\newblock In {\em Trends in theory and practice of nonlinear differential
  equations (Arlington, Tex., 1982)}, pages 547--552, New York, NY, USA, 1983.
  Lecture Notes in Pure and Applied Mathematics 90, Dekker.

\bibitem{W2006}
J.~Wu.
\newblock High dimensional data clustering from a dynamical systems point of
  view.
\newblock In Wayne Nagata and Navaratnam~Sri Namachchivaya, editors, {\em
  Bifurcation Theory and Spatio-Temporal Pattern Formation}, volume~49 of {\em
  Fields Inst. Commun.} American Mathematical Society, Providence, Rhode
  Island, 2006.

\bibitem{WZHM2011}
J.~Wu, H.~Zivari-Piran, J.~D. Hunter, and J.~G. Milton.
\newblock Projective clustering using neural networks with adaptive delay and
  signal transmission loss.
\newblock {\em Neural Comput.}, 23:1568--1604, 2011.

\end{thebibliography}

\end{document}